\author{Tamás~Kátay}
\address{\normalfont (TK) Alfréd Rényi Institute of Mathematics, Budapest, Hungary}
\email{13heted@gmail.com}
\title{\sffamily Connecting the space of marked groups and the space of group operations}
\begin{document}

\begin{abstract}
We establish a connection between two well-studied spaces of countable groups: the space of group operations and the space of marked groups. This connection shows that the two spaces are equivalent in terms of generic properties in the sense of Baire category, which allows us to translate several results from the former setting to the latter. As an application, we give new, shorter proofs of two theorems that concern the generic behavior of compact metrizable abelian~groups.
\end{abstract}

\maketitle    
    
\section{Introduction}\label{s.intro}

There are multiple ways to equip the class of countable groups with a nice topology. This paper focuses on two of these and the connection between them.

The first approach is to exclude finite groups, fix $\nat$ as a common underlying set, and consider the set of all group operations on $\nat$ as a subspace of $\nnn$, which carries the product of the discrete topologies. We call the resulting space $\calg$ \emphd{the space of group operations}. This approach stems from the general concept of so-called logic spaces (see, e.g, \cite[Def~16.5]{kechris2012classical}) and has been used in \cite{ivanov1999generic, goldbring2023generic, elekes2024generic, darji2023generic, ivanov2024generic} to study generic properties of groups in the sense of Baire category. It is very natural from a logician's perspective.

The second approach captures countable groups as quotients of the free group $F_\infty$ of rank $\omega$. In this case, groups are encoded by normal subgroups of $F_\infty$, which form a closed subspace $\calm$ in $2^{F_\infty}$ called \emphd{the space of marked groups}. The concept was sketched by Gromov in his celebrated paper \cite{gromov1981groups} on polynomial growth and was made explicit by Grigorchuk in another influential paper \cite{grigorchuk1984degrees}. Since then, spaces of marked groups have been extensively studied \cite{champetier2000Lespace, champetier2004limit, cornulier2009isolated, zheng2022asymptotic}, although primarily not from the perspective of Baire category, with the exception of \cite{osin2021topological}. This setting is more natural from the viewpoint of geometric group theory.

The goal of this paper is to establish a connection between these two spaces that allows the transfer of generic properties. Our proof is direct and requires only elementary tools. It was brought to our attention by A.~Tserunyan that such a connection can also be derived from recent results of R.~Chen \cite{CHEN_2025}. Since this general approach requires more theoretical background, we present only a relatively self-contained argument, also due to R.~Chen, which provides an alternative proof for Corollary~\ref{c.main_intro}. See the appendix.

%To the best of our knowledge no connection has been established between the two settings that would allow the transfer of generic properties.
Some interplay between the two settings has already appeared in \cite{goldbring2023generic}, where Proposition~3.2.1 asserts that the natural map from $\calg$ to $\calm$ (defined in Section~\ref{s.results}) is continuous and surjective. However, this claim, as stated, is incorrect. (This does not affect the main results of \cite{goldbring2023generic}.) In fact, we will show in Section~\ref{s.results} that the range of this map is nowhere dense in $\calm$.

The main result of this paper is the following theorem.

\begin{theorem}\label{t.main_intro}
There is a comeager set $\cald\subseteq\calm$ and a surjective open continuous map $f:\cald\to\calg$ that maps each group to an isomorphic copy of itself. That is, $F_\infty/N\cong(\nat,f(N))$ for every $N\in\cald$.
\end{theorem}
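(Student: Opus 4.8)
The plan is to build $f$ by transporting, for each $N$, the group structure of $F_\infty/N$ to $\nat$ along a canonically chosen bijection $\nat\to F_\infty/N$, and to take $\cald$ to be a comeager set on which this bijection can be read off continuously. Write $x_0,x_1,\dots$ for the free generators of $F_\infty$ and let $\nu\colon\calg\to\calm$ be the natural map, sending an operation $\star$ to the kernel of the epimorphism $F_\infty\to(\nat,\star)$ determined by $x_n\mapsto n$. First I would fix $\cald=\cald_1\cap\cald_2$, where $\cald_1=\{N:[F_\infty:N]=\infty\}$ and $\cald_2=\{N:\text{every coset of }N\text{ contains some generator }x_n\}$. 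Both are comeager: the set $\{N:[F_\infty:N]\le n\}$ is closed (exhibiting $n{+}1$ words in pairwise distinct cosets is an open condition) and has empty interior, since inside any basic open set the finitely many constraints can be realized by an epimorphism onto a quotient with an extra infinite-cyclic direct factor, forcing infinite index; and $\cald_2=\bigcap_w\{N:\exists n,\ w^{-1}x_n\in N\}$ is a countable intersection of dense open sets, density being witnessed by redefining a single unused generator so that it maps to the class of $w$.

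For $N\in\cald$ I would define the bijection greedily: scan $x_0,x_1,\dots$ and let $h_0,h_1,\dots$ be the subsequence of generators that are the first to meet each coset; $\cald_1$ makes this list infinite and $\cald_2$ makes it meet every coset, so $\beta_N\colon k\mapsto h_kN$ is a bijection $\nat\to F_\infty/N$. Setting $f(N)(a,b)=c\iff h_a h_b N=h_c N$ transports the group law, so $(\nat,f(N))\cong F_\infty/N$ by construction. Continuity is the routine point: each of $h_0,\dots,h_k$ and each entry $f(N)(a,b)$ is decided by finitely many membership queries of the form $w\in N$, hence is locally constant on $\cald$.

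For surjectivity I would use the image of $\nu$. If $N=\nu(\star)$ then the generators lie in pairwise distinct cosets (since $x_mN=x_nN$ forces $m=n$) and cover all cosets (the class of a word $w$ with $\psi_\star(w)=v$ contains $x_v$), so $N\in\cald$ and the greedy list is $h_k=x_k$; then $f(N)(a,b)=c\iff x_a x_b N=x_c N\iff a\star b=c$, i.e. $f(\nu(\star))=\star$, and $f$ is onto. This is exactly where a ``moving identity'' is accommodated: the identity coset is $h_kN$ for the unique $k$ with $x_k\in N$, which can be any element of $\nat$.

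The main work is openness, which I expect to be the crux. Given a basic clopen $V\subseteq\cald$ cut out by finitely many constraints on words in $x_0,\dots,x_r$, and some $N_0\in V$ with $\star_0=f(N_0)$, I would produce a finite $D\subseteq\nat\times\nat$ such that every operation $\star'$ agreeing with $\star_0$ on $D$ lies in $f(V)$, witnessing that $f(V)$ is a neighborhood of $\star_0$. The lift is an epimorphism $\psi'\colon F_\infty\to(\nat,\star')$: on the constrained generators I set $\psi'(x_i)$ equal to the label $b_i\in\nat$ that $x_i$ carries in $N_0$ (the greedy order of $N_0$ guarantees $b_0,\dots,b_r$ introduce the values $0,1,\dots,m$ in increasing order of first appearance, where $m$ is the largest such label); on the remaining generators I introduce the still-missing labels $m{+}1,m{+}2,\dots$ in order. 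Then $\psi'$ is onto, its kernel $N'$ lies in $\cald$ (every label, hence every coset, is met by a generator, and the quotient is infinite), the greedy list of $N'$ satisfies $\psi'(h'_k)=k$, so $f(N')=\star'$; and taking $D$ to contain all products (and the identity value) arising when the constraint words are evaluated under $\star_0$ guarantees $\psi'(v)=\psi_{N_0}(v)$ on those words, hence $N'\in V$. The delicate point throughout is to keep the greedy labelling of $N'$ in step with the prescribed labelling of $\star'$ while simultaneously matching $N_0$ on the finitely many constrained coordinates.
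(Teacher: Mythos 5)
Your construction is essentially the paper's: the same greedy least-indexed transversal of generators defines $f(N)(a,b)=c\iff h_ah_bN=h_cN$, and your continuity and openness arguments mirror the paper's proof of Theorem~\ref{t.main} step for step (your relabelling homomorphism $\psi'$ is the paper's $\varphi$, with the same increasing-first-appearance condition on labels, and your finite agreement set $D$ plays the role of the basic clopen sets from Proposition~\ref{p.group_op_basis}, where one should just make sure $D$ also contains the pairs witnessing the needed inverses). The only deviation is harmless: your comeager set demands one generator per coset where the paper's $\cald$ demands infinitely many, which is exactly why you can witness surjectivity via the canonical section $x_n\mapsto n$, whereas the paper's stronger $\cald$ is disjoint from $\ran(\Phi)$ and therefore uses a marking with infinite fibers instead.
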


\begin{cor}\label{c.main_intro}
A group property is generic in $\calg$ if and only if it is generic in $\calm$.
\end{cor}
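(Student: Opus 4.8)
The plan is to read off genericity as comeagerness and transport it across the map $f$ from Theorem~\ref{t.main_intro}. Fix a group property $P$, and let $A_{\calg}\subseteq\calg$ and $A_{\calm}\subseteq\calm$ be the sets of groups satisfying $P$ in the two spaces; both are invariant under isomorphism. Since $f(N)$ is isomorphic to $F_\infty/N$ for every $N\in\cald$ and $P$ is an isomorphism invariant, a point $N\in\cald$ lies in $A_{\calm}$ exactly when $f(N)\in A_{\calg}$; that is, $f^{-1}(A_{\calg})=A_{\calm}\cap\cald$. It therefore suffices to prove two transfer facts: (i) because $\cald$ is comeager in $\calm$, a set $S\subseteq\calm$ is comeager in $\calm$ if and only if $S\cap\cald$ is comeager in the subspace $\cald$; and (ii) because $f\colon\cald\to\calg$ is a continuous open surjection, a set $B\subseteq\calg$ is comeager in $\calg$ if and only if $f^{-1}(B)$ is comeager in $\cald$. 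Chaining (i) (applied to $S=A_{\calm}$) with the identity $A_{\calm}\cap\cald=f^{-1}(A_{\calg})$ and then (ii) (applied to $B=A_{\calg}$) yields the corollary.

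For (i) I would use only that $\cald$ is dense (being comeager in the Baire space $\calm$) and that $\calm\setminus\cald$ is meager. The point is the elementary observation that, for a dense subspace $\cald$, a subset of $\cald$ is nowhere dense in $\cald$ if and only if it is nowhere dense in $\calm$: density handles one implication and the identity $\overline{E}^{\cald}=\overline{E}^{\calm}\cap\cald$ the other. Passing to countable unions converts this into the stated equivalence of meagerness, and hence of comeagerness, after absorbing the meager set $\calm\setminus\cald$.

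For (ii), one implication is routine: if $f$ is a continuous open surjection then the preimage of a dense open set is again dense open (continuity gives openness of the preimage, while openness together with surjectivity gives density), so the preimage of a comeager set is comeager. The reverse implication is the crux, and is where a naive argument fails: images of nowhere dense sets under an open map need not be nowhere dense, so one cannot simply push a meager cover forward. Instead I would prove the lemma that \emph{if $f\colon\cald\to\calg$ is a continuous open surjection and $G\subseteq\cald$ is a dense $G_\delta$, then $f(G)$ is comeager in $\calg$}. Granting this, if $f^{-1}(B)$ is comeager it contains a dense $G_\delta$ set $G$, and then $f(G)$ is comeager and $f(G)\subseteq f(f^{-1}(B))=B$ by surjectivity, so $B$ is comeager.

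To prove the lemma I would write $G=\bigcap_n U_n$ with each $U_n$ dense open in $\cald$ and estimate the complement $\calg\setminus f(G)=\{z:f^{-1}(z)\cap G=\emptyset\}$. The geometric inputs are that each fibre $f^{-1}(z)$ is closed in $\cald$, hence a Baire space, and that for a dense open $U\subseteq\cald$ and a basic open $V\subseteq\cald$ the image $f(V\cap U)$ is dense in the open set $f(V)$. Using a countable basis $\{V_k\}$ of $\cald$, the set of $z$ for which some $U_n$ fails to be dense in the fibre $f^{-1}(z)$ is contained in $\bigcup_{n,k}\bigl(f(V_k)\setminus f(V_k\cap U_n)\bigr)$, and each set $f(V_k)\setminus f(V_k\cap U_n)$ is nowhere dense precisely because $f(V_k\cap U_n)$ is dense in $f(V_k)$. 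Hence this bad set is meager; off it, every $U_n$ meets $f^{-1}(z)$ densely, so the Baire property of the (nonempty, by surjectivity) fibre forces $G\cap f^{-1}(z)\neq\emptyset$, i.e.\ $z\in f(G)$. Thus $f(G)$ is comeager, completing (ii). The main work, and the only nonformal step, is this fibrewise Baire-category argument for the backward transfer of genericity.
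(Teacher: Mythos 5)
Your proof is correct, but the crux is handled by a genuinely different argument from the paper's. The outer skeleton coincides: the paper also derives the corollary from Theorem~\ref{t.main} via the identity $f^{-1}(\calp)=\calp^*\cap\cald$, the comeagerness of $\cald$ in $\calm$ (Lemma~\ref{l.open_dense}), and a two-way transfer principle for open continuous surjections (Proposition~\ref{p.open_cont_surj}); your steps (i) and (ii) are exactly these ingredients. The divergence is in the hard half of (ii). The paper proves that images of comeager sets are comeager by descriptive set theory: $f(A)$ is analytic, hence has the Baire property, and if it were non-comeager it would be meager in some nonempty open $V$, which contradicts part (1) of Proposition~\ref{p.open_cont_surj} applied over $f^{-1}(V)$. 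You instead give an elementary fibrewise Baire-category proof: writing $G=\bigcap_n U_n$, the set of $z$ for which some $U_n$ fails to be dense in $f^{-1}(z)$ is covered by $\bigcup_{n,k}(f(V_k)\setminus f(V_k\cap U_n))$, each piece nowhere dense, and off this meager set the nonempty fibre, being closed in the Polish space $\cald$ and hence Baire, meets $G$. This is in substance a self-contained proof of the generalized Kuratowski--Ulam theorem \cite[Thm~A.1]{MELLERAY_TSANKOV_2013}, which the paper invokes only in the appendix for Chen's alternative argument; your route avoids the Baire property of analytic sets entirely, at the cost of more work, whereas the paper's reduction is two lines given the standard machinery. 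One word of caution in your lemma: $f(V_k)\setminus f(V_k\cap U_n)$ is nowhere dense not ``precisely because'' $f(V_k\cap U_n)$ is dense in $f(V_k)$ --- the complement of a merely dense subset of an open set need not be nowhere dense (consider the irrationals inside the reals) --- but because $f(V_k\cap U_n)$ is in addition \emph{open}, $f$ being an open map; since openness of $f$ is among your hypotheses, this is a one-word repair rather than a gap.
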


This corollary and a more general variant of it allow us to translate a large number of genericity results from $\calg$ to $\calm$. They also enable us to give new, shorter proofs of the following theorems.

\begin{theorem}\label{t.odometer_intro}
\cite[Thm~4.7]{elekes2024generic}
The isomorphism class of the group
$$Z=\prod_{p\text{ prime}}(Z_p)^\nat,$$
where $Z_p$ denotes the group of $p$-adic integers, is comeager in the space of compact metrizable abelian groups.
\end{theorem}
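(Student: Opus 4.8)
The plan is to dualize the whole problem. By Pontryagin duality, compact metrizable abelian groups correspond to countable discrete abelian groups, and under this correspondence the class of groups isomorphic to $Z$ will correspond to the class isomorphic to $\widehat{Z}$. The strategy is then to transport the desired genericity statement across duality to the abelian part of $\calm$, move it to the space of abelian group operations via (the abelian variant of) Corollary~\ref{c.main_intro}, and finally verify it there by elementary density arguments. First I would fix the dual picture concretely. If $G$ is compact metrizable abelian, then $\widehat{G}$ is a countable discrete abelian group, hence a quotient of the free abelian group $\mathbb{Z}^{(\omega)}=F_\infty/[F_\infty,F_\infty]$, and dualizing embeds $G$ as a closed subgroup of $\widehat{\mathbb{Z}^{(\omega)}}=\mathbb{T}^\omega$. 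Thus the space of compact metrizable abelian groups is the space of closed subgroups of $\mathbb{T}^\omega$, and the annihilator map $H\mapsto H^{\perp}$ biject it with the subgroups of $\mathbb{Z}^{(\omega)}$, i.e.\ with the abelian part $\calm_{\mathrm{ab}}$ of $\calm$ (those $N$ with $[F_\infty,F_\infty]\subseteq N$). This bijection carries a closed subgroup $H\leq\mathbb{T}^\omega$ to the marked group $F_\infty/N$ with $\widehat{F_\infty/N}\cong H$, and in particular sends isomorphism classes to isomorphism classes. I would check that it is a homeomorphism (equivalently, that it preserves Baire category), a standard consequence of the functoriality of duality together with the definitions of the two topologies.

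Next I would identify $\widehat{Z}$. Using that duality turns products of compact groups into direct sums of the duals, and that $\widehat{Z_p}\cong\mathbb{Z}(p^\infty)$, one computes
\[
\widehat{Z}\;\cong\;\bigoplus_{p\text{ prime}}\bigl(\mathbb{Z}(p^\infty)\bigr)^{(\omega)}\;\cong\;(\mathbb{Q}/\mathbb{Z})^{(\omega)},
\]
the divisible torsion abelian group of infinite $p$-rank for every prime $p$. By the previous paragraph, the theorem is equivalent to the assertion that the isomorphism class of $(\mathbb{Q}/\mathbb{Z})^{(\omega)}$ is comeager in $\calm_{\mathrm{ab}}$.

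At this point I would apply the abelian variant of Corollary~\ref{c.main_intro} (the more general version alluded to above), reducing the problem to showing that the isomorphism class of $(\mathbb{Q}/\mathbb{Z})^{(\omega)}$ is comeager in the space $\calg_{\mathrm{ab}}$ of abelian group operations on $\nat$. This is exactly where the transfer pays off: in $\calg_{\mathrm{ab}}$ one may freely extend finite partial operations, so the relevant conditions are easily seen to be open and dense. Concretely, for every $a\in\nat$ and $n\geq1$ the conditions ``$a$ is divisible by $n$'' and ``$a$ has finite order'' are open and dense, and for every prime $p$ and every $k$ the condition ``there exist $k$ independent elements of order $p$'' is open and dense. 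Intersecting these countably many comeager sets shows that the generic abelian group is divisible, torsion, and of infinite $p$-rank for all $p$; by the structure theory of divisible abelian groups this forces it to be isomorphic to $(\mathbb{Q}/\mathbb{Z})^{(\omega)}$.

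I expect the main obstacle to be the first step rather than the last: pinning down the space of compact metrizable abelian groups concretely and proving that the duality correspondence is a homeomorphism onto $\calm_{\mathrm{ab}}$ (or at least that it preserves comeagerness), so that genericity may legitimately be transported across it. The density computations in $\calg_{\mathrm{ab}}$, by contrast, are routine precisely because we have moved off of $\calm$, where extending partial data is constrained by the subgroup structure of the free group and the very same statements would be considerably more delicate to establish directly.
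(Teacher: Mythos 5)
Your proposal is correct and follows essentially the same route as the paper: annihilator/Pontryagin duality identifies $\cals(\TT)$ with the marked abelian groups (the paper's Proposition~\ref{p.abelian_equiv} plus the Fisher--Gartside homeomorphism, Theorem~\ref{t.pontryagin_homeo}), the transfer principle (Corollary~\ref{c.main_general}, together with Lemma~\ref{l.open_dense} to pass between $\cala^*$ and $\cala'$, which quietly handles the finite-group points you gloss over) moves the problem to the space of abelian group operations, and there the genericity of $\bigoplus_{i\in\nat}(\rat/\integer)$ is exactly Theorem~\ref{t.generic_ctbl_ab}. The only difference is that the paper cites this last result from \cite[Cor~3.25]{elekes2024generic}, whereas you sketch its (correct, standard) open-dense proof via divisibility, torsion, and infinite $p$-rank.
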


\begin{theorem}\label{t.solenoid_intro}
\cite[Thm~3.7]{darji2023generic}
The isomorphism class of the universal solenoid is comeager in the space of connected compact metrizable abelian groups.
\end{theorem}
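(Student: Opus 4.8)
The plan is to reduce the statement, via Pontryagin duality, to a genericity statement about countable \emph{discrete} abelian groups, and then to invoke the transfer machinery of Corollary~\ref{c.main_intro} (in the relative form valid on suitable subspaces) to move the computation into $\calg$, where it can be carried out by a direct Baire category argument.

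First I would set up the duality. Pontryagin duality restricts to an anti-equivalence between compact metrizable abelian groups and countable discrete abelian groups under which \emph{connected} groups correspond exactly to \emph{torsion-free} groups, and under which the universal solenoid is the dual $\widehat{\mathbb{Q}}$ of the discrete rationals. I would verify that this correspondence is a Baire-category isomorphism between the space of connected compact metrizable abelian groups and the subspace $\mathcal{T}\subseteq\calm$ of (marked) torsion-free abelian groups --- i.e. that it carries comeager sets to comeager sets --- by checking that the Polish topology on the space of connected compact metrizable abelian groups matches the marked-group topology on $\mathcal{T}$ under $G\mapsto\widehat{G}$. This reduces the theorem to the assertion that the isomorphism class of $\mathbb{Q}$ is comeager in $\mathcal{T}$.

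Next I would transfer the problem from $\calm$ to $\calg$. Since being torsion-free abelian and being $\cong\mathbb{Q}$ are isomorphism-invariant group properties, the relative version of Corollary~\ref{c.main_intro} (applied to the class of torsion-free abelian groups, which forms a well-behaved --- e.g. $G_\delta$ --- subspace on both sides, so that the restriction of the map $f$ from Theorem~\ref{t.main_intro} remains open, continuous and surjective onto the corresponding subspace of $\calg$) shows that it is enough to prove that the isomorphism class of $\mathbb{Q}$ is comeager in the subspace $\calg_{\mathrm{tfa}}\subseteq\calg$ of torsion-free abelian group operations on $\nat$. I would then run the Baire category argument there. A countable torsion-free abelian group is isomorphic to $\mathbb{Q}$ precisely when it is divisible and has rank $1$, so it suffices to show that each of these is generic. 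Divisibility is the intersection over all $a\in\nat$ and $k\ge 1$ of the sets $\{a\text{ is divisible by }k\}$; each is open, and each is dense because any finite partial torsion-free structure can be extended by adjoining a $k$-th root of $a$ without creating torsion, so divisibility is comeager. Rank $\le 1$ is the intersection over all pairs $a,b\in\nat$ of the sets $C_{a,b}=\{\exists\,m,n,\ \text{not both }0,\ ma=nb\}$; each $C_{a,b}$ is open (a countable union of clopen equation-conditions) and dense (any finite partial structure can be extended by imposing a relation $ma=nb$ compatibly with torsion-freeness), so $\bigcap_{a,b}C_{a,b}$ is comeager. Since the groups in $\calg$ are infinite, rank $\ge 1$ always holds, and the generic group is therefore divisible of rank $1$, i.e. isomorphic to $\mathbb{Q}$.

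The main obstacle is the first step: establishing that Pontryagin duality is genuinely \emph{category-preserving} between the two Polish spaces. The duality is a bijection of isomorphism classes for free, but to transport comeagerness one must match the two topologies --- the hyperspace/Effros-type topology on connected compact metrizable abelian groups and the marked-group topology on $\mathcal{T}\subseteq\calm$ --- and confirm that $G\mapsto\widehat{G}$ is a homeomorphism, or at least bicontinuous enough to preserve Baire category. One must also check that the torsion-free abelian groups form a subspace regular enough for the relative form of Corollary~\ref{c.main_intro} to apply. By comparison, the concluding density computations in $\calg_{\mathrm{tfa}}$ are routine, the only point of care being the preservation of torsion-freeness when extending finite partial structures.
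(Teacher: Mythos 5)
Your proposal is correct and follows the same overall route as the paper: dualize to reduce the statement to countable discrete torsion-free abelian groups, then transfer between $\calm$ and $\calg$. Two differences are worth noting. First, the step you single out as the main obstacle --- that Pontryagin duality is genuinely category-preserving between $\calc(\TT)$ and the marked torsion-free abelian groups --- is not something the paper re-derives: it invokes the Fisher--Gartside theorem (Theorem~\ref{t.pontryagin_homeo}), which states that the annihilator map $\ann:\cals(\TT)\to\cala''$ is a homeomorphism with $F/\ann(K)\cong\what K$, together with Proposition~\ref{p.pontryagin}~(5) to see that connectedness corresponds exactly to torsion-freeness, and Proposition~\ref{p.abelian_equiv} to pass from subgroups of $F=\bigoplus_{i\in\nat}\integer$ to the subspace $\cala'\subseteq\calm$; so your ``obstacle'' is resolved by a citation, not by matching topologies by hand. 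Second, where the paper simply cites Theorem~\ref{t.torsion_free} (\cite[Thm~4.3]{darji2023generic}) for the genericity of the isomorphism class of $(\rat,+)$ in $\calt\calf\subseteq\calg$, you re-prove it via divisibility plus rank~$\leq 1$; this makes your argument more self-contained, at the cost of redoing known work. Your density sketches do go through, but a careful write-up must account for the fact that a nonempty basic clopen set in $\calg$ fixes finitely many word equations \emph{and inequations} among finitely many numbers, and that distinct numbers are automatically distinct group elements; both density claims are then handled uniformly by taking the finitely generated (hence free abelian) subgroup $H\cong\integer^r$ of a witness $G_0$ generated by the relevant elements, choosing a homomorphism $H\to\rat$ that is nonzero on the finitely many nonzero elements that must be preserved (avoiding finitely many kernels), and pulling the structure of $\rat$ back along a bijection $\nat\to\rat$, which makes the generic group divisible of rank~$1$ in one stroke. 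Finally, the ``relative form of Corollary~\ref{c.main_intro}'' you invoke is Corollary~\ref{c.main_general}, and applying it requires the bookkeeping the paper carries out explicitly: checking that $\calt\calf$ and $\calt\calf^*$ are $G_\delta$, and bridging the small discrepancy between $\calt\calf^*$ (which omits the trivial group) and $\calt\calf'$ via $(\star)_{\calt\calf'}$ and Lemma~\ref{l.open_dense}, so that comeagerness in $\calt\calf^*$ upgrades to comeagerness in $\calt\calf'$ before dualizing.
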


\emphd{Paper outline.} Section~\ref{s.prelim} contains essential preliminaries. Section~\ref{s.results} is dedicated to the results and their direct corollaries. Section~\ref{s.applications} concludes the paper with the above-mentioned applications.

\section{Preliminaries}\label{s.prelim}

We start by introducing notations and conventions.

We denote by $F_n$ the free group generated by the set $X_n=\{x_0,\ldots,x_{n-1}\}$. Similarly, $F_\infty$ denotes the free group generated by the set $X_\infty=\{x_0,x_1,\ldots\}$. For any group $G$, a map from $X_\infty$ (resp.~$X_n$) onto a generating set of $G$ is called a \emphd{marking} of $G$, which extends to a unique homomorphism $\wtilde m:F_\infty\to G$ (resp.~$\wtilde m:F_n\to G$). We think of elements of free groups as reduced words. We may write a word $w$ as $w(x_0,\ldots,x_{n-1})$ to express that the letters of $w$ are \emph{among} $x_0,\ldots,x_{n-1}$. For any group $G$ and elements $a_0,\ldots,a_{n-1}\in G$, when convenient, we simply write $w(a_0,\ldots,a_{n-1})$ for $\wtilde m(w(x_0,\ldots,x_{n-1}))$, where $m$ is any marking that maps $x_i$ to $a_i$ for each $i<n$. We fix an enumeration $F_\infty=\{w_0,w_1,\ldots\}$. 

\subsection{Baire category}\label{ss.baire_category}

Recall the following well-known facts.

\begin{theorem}\label{t.comp_metriz}
\cite[Thm~3.11]{kechris2012classical} A subspace $A$ of a completely metrizable space $X$ is completely metrizable if and only if $A$ is $G_\delta$ in $X$.
\end{theorem}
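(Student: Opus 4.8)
The plan is to prove both implications of this classical result (Alexandrov's theorem), fixing once and for all a complete metric $d$ on $X$ compatible with its topology.

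For the ``if'' direction, suppose $A=\bigcap_{n}U_n$ with each $U_n\subseteq X$ open. The idea is to build a new, complete metric on $A$ that still induces the subspace topology but blows up as a point approaches the boundary of any $U_n$, thereby preventing Cauchy sequences from escaping $A$. Concretely, since $A\subseteq U_n$, each function $f_n(x)=1/d(x,X\setminus U_n)$ is well-defined, positive and continuous on $A$, and I would set
$$\rho(x,y)=d(x,y)+\sum_{n}\min\bigl(2^{-n},\,|f_n(x)-f_n(y)|\bigr).$$
First I would check that $\rho$ is a metric inducing the same topology as $d$ on $A$: the series converges uniformly, each $f_n$ is $d$-continuous, so $\rho$ and $d$ have the same convergent sequences. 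Then I would verify completeness: a $\rho$-Cauchy sequence $(x_k)$ is $d$-Cauchy, hence $d$-converges to some $x\in X$; the fact that each $\bigl(f_n(x_k)\bigr)_k$ is Cauchy bounds $f_n(x_k)$ away from $+\infty$, forcing $d(x_k,X\setminus U_n)$ to stay bounded below, so $x\in U_n$ for every $n$ and thus $x\in A$.

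For the ``only if'' direction, suppose $A$ carries a complete metric $d_A$ compatible with the subspace topology, and let $\bar A$ be its closure in $X$. For each $n\ge 1$ I would define
$$W_n=\bigl\{x\in X:\ \exists\ d\text{-open } V\ni x \text{ with } \operatorname{diam}_{d_A}(V\cap A)\le 1/n\bigr\},$$
which is open in $X$ as a union of open sets. I then claim $A=\bar A\cap\bigcap_n W_n$. The inclusion $\subseteq$ is immediate since $d_A$ induces the subspace topology. For $\supseteq$, given $x$ in the right-hand side I would pick a sequence in $A$ that $d$-converges to $x$; membership $x\in W_n$ for all $n$ forces this sequence to be $d_A$-Cauchy, so by completeness of $d_A$ it converges in $A$ to a point that must equal $x$ by uniqueness of limits in $X$. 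Since $\bar A$ is closed (hence $G_\delta$) in $X$ and each $W_n$ is open, $A$ is $G_\delta$.

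I expect the main obstacle in both directions to be the completeness bookkeeping rather than the topological equivalence. In the ``if'' direction, the delicate point is arguing that the auxiliary coordinates $f_n$ genuinely trap the $d$-limit inside each $U_n$; in the ``only if'' direction, it is verifying that the diameter condition defining $W_n$ is exactly strong enough to upgrade $d$-convergence of an approximating sequence to $d_A$-Cauchyness. Both hinge on the same mechanism: combining completeness of the relevant metric with a control that degenerates precisely at the ``missing'' points of $A$.
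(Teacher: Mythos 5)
The paper does not prove this statement---it is quoted verbatim from the cited reference---and your argument is correct and is essentially the standard proof found there: your metric $\rho$ is an unpacking of the usual embedding $x\mapsto\bigl(x,(1/d(x,X\setminus U_n))_n\bigr)$ of $A=\bigcap_n U_n$ into $X\times\mathbb{R}^{\nat}$ with closed image, and your sets $W_n$ are the standard oscillation/diameter argument for the converse. The only nit is that when $U_n=X$ the function $f_n(x)=1/d(x,X\setminus U_n)$ is undefined (distance to the empty set), so one should discard such indices or declare the corresponding summand to be $0$.
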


\begin{theorem}[Baire Category Theorem]
\label{t.BCT}
In a completely metrizable space $X$, nonempty open subsets are non-meager.
\end{theorem}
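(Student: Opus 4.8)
The plan is to argue by contradiction via the classical shrinking-ball construction, using completeness of a compatible metric. Since $X$ is completely metrizable, fix a complete metric $d$ inducing its topology. Let $U\subseteq X$ be nonempty and open, and suppose toward a contradiction that $U$ is meager, say $U\subseteq\bigcup_{n}A_n$ with each $A_n$ nowhere dense. As the closure of a nowhere dense set is again nowhere dense, I may replace each $A_n$ by $\overline{A_n}$ and thus assume every $A_n$ is closed with empty interior.

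The core of the argument is to build a nested sequence of closed balls that peels off the sets $A_n$ one at a time. Since $A_0$ has empty interior, $U$ is not contained in $A_0$, so $U\setminus A_0$ is nonempty and open; choosing any point of it and shrinking, I obtain $x_0$ and $0<r_0\le 1$ with $\overline{B}(x_0,r_0)\subseteq U\setminus A_0$. Recursively, given $\overline{B}(x_n,r_n)$, the set $B(x_n,r_n)\setminus A_{n+1}$ is nonempty (because $A_{n+1}$ has empty interior) and open, so I may pick $x_{n+1}$ and $0<r_{n+1}\le 2^{-(n+1)}$ with $\overline{B}(x_{n+1},r_{n+1})\subseteq B(x_n,r_n)\setminus A_{n+1}$. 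Two features must be checked along the way: the balls are genuinely nested, $\overline{B}(x_{n+1},r_{n+1})\subseteq B(x_n,r_n)\subseteq\overline{B}(x_n,r_n)$, and $\overline{B}(x_n,r_n)\cap A_n=\emptyset$ for every $n$.

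With the construction complete, the centers $(x_n)$ form a Cauchy sequence: for $m\ge n$ nesting gives $x_m\in\overline{B}(x_n,r_n)$, so $d(x_m,x_n)\le r_n\le 2^{-n}\to 0$. By completeness $x_n\to x$ for some $x\in X$, and since each $\overline{B}(x_n,r_n)$ is closed and contains all but finitely many terms, $x\in\overline{B}(x_n,r_n)$ for every $n$. In particular $x\in\overline{B}(x_0,r_0)\subseteq U$, yet $x\notin A_n$ for each $n$ because $\overline{B}(x_n,r_n)$ is disjoint from $A_n$. This contradicts $U\subseteq\bigcup_n A_n$ and finishes the proof. I expect the only delicate bookkeeping to be the repeated passage from an open set to a closed ball strictly inside it, which simultaneously secures the nesting and the separation from each $A_n$; once the complete metric is fixed, the rest is routine.
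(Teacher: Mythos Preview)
Your argument is the standard nested-ball proof of the Baire Category Theorem and is correct as written; the only point worth a second glance is the passage from a nonempty open set to a closed ball contained in it, which you handle correctly by choosing the radius small enough.

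The paper, however, does not supply a proof of Theorem~\ref{t.BCT} at all: it simply records the Baire Category Theorem as a well-known background fact in the preliminaries (alongside Theorem~\ref{t.comp_metriz}) and then uses it, notably in deriving Proposition~\ref{p.comeager_dense_G_delta} and Proposition~\ref{p.open_cont_surj}. So there is nothing in the paper to compare your argument to beyond noting that you have provided a complete and classical proof where the paper merely cites the result.
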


In a space $X$ of mathematical objects, we can view properties of objects as subsets of $X$. A property is called \emphd{generic} if it is comeager as a subset of $X$. It is easy to prove the following well-known corollary of the Baire Category Theorem.

\begin{prop}\label{p.comeager_dense_G_delta}
A subset $A$ of a completely metrizable space $X$ is comeager if and only if $A$ contains a dense $G_\delta$ subset of $X$.
\end{prop}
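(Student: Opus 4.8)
The plan is to prove both implications directly from the definitions, invoking the Baire Category Theorem (Theorem~\ref{t.BCT}) only for one of them. Recall that $A$ is comeager precisely when $X\setminus A$ is meager, i.e.\ contained in a countable union of nowhere dense sets, and that a $G_\delta$ set is a countable intersection of open sets.

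For the implication from right to left, suppose $A$ contains a dense $G_\delta$ set $G=\bigcap_n U_n$ with each $U_n$ open. First I would observe that each $U_n$ is dense, since $U_n\supseteq G$ and $G$ is dense. Consequently each complement $X\setminus U_n$ is closed with empty interior, hence nowhere dense, so $X\setminus G=\bigcup_n(X\setminus U_n)$ is meager. As $X\setminus A\subseteq X\setminus G$, the set $A$ is comeager. This direction uses no completeness hypothesis.

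For the converse, suppose $A$ is comeager, so $X\setminus A\subseteq\bigcup_n F_n$ with each $F_n$ nowhere dense. Replacing $F_n$ by its closure (which remains nowhere dense) I may assume each $F_n$ is closed, so that each $X\setminus F_n$ is genuinely open. Then $G:=\bigcap_n(X\setminus F_n)$ is a $G_\delta$ set contained in $A$, and each $X\setminus F_n$ is open and dense. It remains to show that $G$ is dense, which is the only step requiring completeness: if $G$ were not dense, there would be a nonempty open $W$ disjoint from $G$, whence $W\subseteq\bigcup_n F_n$ exhibits $W$ as meager, contradicting Theorem~\ref{t.BCT}, which asserts that nonempty open sets are non-meager. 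Thus $G$ is a dense $G_\delta$ contained in $A$.

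The argument is elementary throughout, and I do not anticipate a genuine obstacle; the only substantive input is the density of a countable intersection of dense open sets, which I deduce from the stated form of the Baire Category Theorem. The one point demanding care is in the converse direction, where one must pass to the closures of the nowhere dense sets so that their complements are open, ensuring that the resulting set is actually $G_\delta$.
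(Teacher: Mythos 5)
Your proof is correct: both directions are argued soundly, the passage to closures in the converse direction (needed so that the complements are open and the intersection is genuinely $G_\delta$) is handled properly, and you correctly isolate the use of completeness to the density of $\bigcap_n(X\setminus F_n)$ via Theorem~\ref{t.BCT}. The paper itself gives no proof of Proposition~\ref{p.comeager_dense_G_delta} --- it is stated as an easy, well-known corollary of the Baire Category Theorem --- so there is nothing to compare against; your argument is exactly the standard one the paper implicitly has in mind.
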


We will heavily use the well-known fact that genericity can be transferred back and forth between two Polish spaces via a surjective open continuous map. For completeness, we give a short proof.

\begin{prop}\label{p.open_cont_surj}
For any Polish spaces $X$ and $Y$, subsets $A\subseteq X$ and $B\subseteq Y$, and surjective open continuous map $f:X\to Y$, the following hold:
\begin{enumerate}
    \item $B\subseteq Y$ is comeager $\implies$ $f^{-1}(B)\subseteq X$ is comeager,
    \item $A\subseteq X$ is comeager $\implies$ $f(A)\subseteq Y$ is comeager.
\end{enumerate}
\end{prop}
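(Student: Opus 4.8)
The plan is to treat the two implications separately, deriving both from the single structural fact that an open continuous map pulls nowhere dense sets back to nowhere dense sets.

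For (1) I would first record the \emph{pullback lemma}: if $D\subseteq Y$ is nowhere dense, then $f^{-1}(D)$ is nowhere dense in $X$. Since $f$ is continuous, $\overline{f^{-1}(D)}\subseteq f^{-1}(\overline D)$, so it suffices to see that $f^{-1}(\overline D)$ has empty interior; but if a nonempty open $U$ were contained in it, then $f(U)$ would be a nonempty open subset of $\overline D$ (here I use that $f$ is open), contradicting that $\overline D$ has empty interior. Writing $Y\setminus B$ as a countable union of nowhere dense sets and pulling back term by term shows that $X\setminus f^{-1}(B)=f^{-1}(Y\setminus B)$ is meager, which is exactly (1). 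This half uses only continuity and openness, not surjectivity.

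For (2) the key observation is that it reduces to the following claim about pushing meagerness forward: $(\star)$ if $f^{-1}(D)$ is meager, then $D$ is meager. Granting $(\star)$, set $B=f(A)$. Since $A\subseteq f^{-1}(f(A))=f^{-1}(B)$ and $A$ is comeager, $f^{-1}(B)$ is comeager, so $f^{-1}(Y\setminus B)=X\setminus f^{-1}(B)$ is meager; applying $(\star)$ with $D=Y\setminus B$ yields that $Y\setminus B$ is meager, i.e.\ $f(A)=B$ is comeager. Surjectivity enters here through the identity $f(f^{-1}(S))=S$, and it will be used again in proving $(\star)$.

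To prove $(\star)$ I would argue that $D$ has the Baire property and then exploit the open part. Assuming $D=U\triangle M$ with $U\subseteq Y$ open and $M$ meager, continuity gives that $f^{-1}(U)$ is open and (1) gives that $f^{-1}(M)$ is meager; hence $f^{-1}(U)=f^{-1}(D)\triangle f^{-1}(M)$ is meager, being a symmetric difference of two meager sets. A nonempty open subset of the Baire space $X$ is non-meager (Theorem~\ref{t.BCT}), so $f^{-1}(U)=\emptyset$, and surjectivity forces $U=f(f^{-1}(U))=\emptyset$; thus $D=M$ is meager. The main obstacle is the remaining input: that $f^{-1}(D)$ having the Baire property forces $D$ to have it, i.e.\ that open continuous surjections \emph{reflect} the Baire property. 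I expect to establish this by the standard localization argument, forming the open set $U$ as the union of all open $V\subseteq Y$ for which $f^{-1}(V)\setminus f^{-1}(D)$ is meager and using the Banach category theorem to control this union. The subtlety it resolves is precisely that a single fiber $f^{-1}(y)$ may meet several pieces of a meager cover of $f^{-1}(D)$, so no naive fiberwise decomposition of $D$ into nowhere dense sets is available; this is why $(\star)$ cannot be obtained by merely pushing a nowhere dense cover forward.
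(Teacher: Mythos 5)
Your part (1) is correct, and your reduction of (2) to the pushforward principle $(\star)$ is also sound; moreover $(\star)$ itself is true. The genuine gap is in your proof of $(\star)$: you make everything rest on the claim that open continuous surjections \emph{reflect} the Baire property, and the localization you sketch for this claim is circular. The Banach category theorem does give that $f^{-1}(U)\setminus f^{-1}(D)$ is meager \emph{in $X$}, where $U$ is the union of all open $V\subseteq Y$ with $f^{-1}(V)\setminus f^{-1}(D)$ meager; but to convert this (and the companion estimate for $f^{-1}(D)\setminus f^{-1}(U)$) into the conclusion that $D\mathbin{\triangle}U$ is meager \emph{in $Y$}, you must push meagerness forward along $f$ --- which is precisely $(\star)$, the statement under proof. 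The circularity is starkest in the only instance you actually need: there $D=Y\setminus f(A)$, so $f^{-1}(D)\subseteq X\setminus A$ is meager and hence has the Baire property trivially; ``reflection'' in this case says exactly ``$f^{-1}(D)$ meager $\implies$ $D$ has the Baire property,'' which via your symmetric-difference computation is equivalent to $(\star)$ itself. So the proposal, as it stands, proves nothing beyond (1).

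Your closing diagnosis --- that a fiber may meet many pieces of a meager cover, so ``no naive fiberwise decomposition of $D$ into nowhere dense sets is available'' --- is also mistaken, and it points away from the standard elementary repair. Cover the meager set $f^{-1}(D)$ by closed nowhere dense sets $F_n$. For $y\in D$ the fiber $f^{-1}(y)$ is nonempty (surjectivity) and closed in $X$, hence Polish and a Baire space, and it is covered by the relatively closed sets $F_n\cap f^{-1}(y)$; so the Baire category theorem \emph{inside the fiber} selects one piece with nonempty relative interior: there are $n,k$ with $\emptyset\neq B_k\cap f^{-1}(y)\subseteq F_n$, where $(B_k)$ is a countable basis of $X$. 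Thus $D\subseteq\bigcup_{n,k}D_{n,k}$ with $D_{n,k}=\{y\in Y:\ \emptyset\neq B_k\cap f^{-1}(y)\subseteq F_n\}$, and each $D_{n,k}$ is nowhere dense: given nonempty open $V\subseteq Y$, either $V\cap f(B_k)=\emptyset$ (and $D_{n,k}\subseteq f(B_k)$), or $B_k\cap f^{-1}(V)$ is nonempty open and contains a nonempty open $W$ disjoint from $F_n$, whence $f(W)$ is a nonempty open subset of $V$ missing $D_{n,k}$ --- this is where openness of $f$ does its work. Alternatively, you could invoke the Kuratowski--Ulam theorem for open maps, \cite[Thm~A.1]{MELLERAY_TSANKOV_2013}, quoted in the appendix of this paper, applied to the saturated meager set $f^{-1}(D)$. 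Note that the paper avoids $(\star)$ for arbitrary $D$ by a different device: using Proposition~\ref{p.comeager_dense_G_delta} it first shrinks $A$ to a dense $G_\delta$ set, so that $f(A)$ is analytic and has the Baire property by \cite[Thm~21.6]{kechris2012classical}, and then localizes much as you do; your route, once patched with the fiberwise argument above, is in fact more elementary, since it bypasses the theorem that analytic sets have the Baire property.
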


\begin{proof}
By Proposition~\ref{p.comeager_dense_G_delta}, we may assume that $A$ and $B$ are dense $G_\delta$ in $X$ and $Y$ respectively.

(1) Since $f$ is open, $f^{-1}(B)$ is dense in $X$. Since $f$ is continuous, $f^{-1}(B)$ is $G_\delta$ in $X$.

(2) Note that $f(A)$ has the Baire property since it is analytic (see \cite[Thm~21.6]{kechris2012classical}). Thus, if $f(A)$ is not comeager in $Y$, then there is nonempty open $V\subseteq Y$ in which it is meager, hence $A\cap f^{-1}(V)\subseteq f^{-1}(f(A)\cap V)$ is meager in the open set $f^{-1}(V)$ by (1). This contradicts that $A$ is comeager in $X$ since $f^{-1}(V)$ is nonempty by the surjectivity of $f$.
\end{proof}

\subsection{The space of group operations}\label{ss.group_operations}

We call the set
$$\calg=\left\{G\in\nnn:\ G\text{ is a group operation}\right\}$$
equipped with the topology inherited from the Polish space $\nnn$ (which carries the product of the discrete topologies) \emphd{the space of group operations}. We denote by $\wtilde G$ the group $(\nat,G)$ associated to the group operation $G$. By a \emphd{group property}, we mean an isomorphism invariant subset of $\calg$.

The above definition of $\calg$ differs slightly from the one given in \cite[Subsec~3.1]{elekes2024generic} and will serve us better in this paper. Let us write $\calg$ as the disjoint union of the clopen subsets
$$\calg_n=\left\{G\in\calg:\ \text{the identity element of } \wtilde G \text{ is }n\right\}=\{G\in\calg:\ G(n,n)=n\}$$
with $n\in\nat$. Also note that for any fixed $n,k\in\nat$ the bijection $\varphi:\nat\to\nat$ that swaps $n$ and $k$ and fixes all the other numbers induces a homeomorphism $h_\varphi:\calg_n\to\calg_k$ defined by $h_\varphi(G)(i,j)=\varphi(G(\varphi^{-1}(i),\varphi^{-1}(j)))$. Then for any $G\in\calg_n$, we have $\wtilde G\cong \wtilde{h_\varphi(G)}$ since $\varphi$ itself is an isomorphism between $\wtilde G$ and $\wtilde {h_\varphi(G)}$. Thus, we may view $\calg$ as infinitely many copies of any given $\calg_n$ placed discretely next to each other. In particular, since each $\calg_n$ is $G_\delta$ in $\nnn$ (see, e.g., \cite[Prop~3.1]{elekes2021generic}), we conclude that $\calg$ is also $G_\delta$ in $\nnn$, hence it is Polish with the subspace topology. It is also clear from the above that a group property $\calp$ is comeager in $\calg$ if and only if $\calp\cap\calg_n$ is comeager in $\calg_n$ for some (equivalently any) $n\in\nat$. Thus, in terms of generic group properties, the spaces $\calg$ and $\calg_n$ are equivalent for any $n\in\nat$.

\begin{remark}
In set definitions such as $\{G\in\calg:\ G(a,b)=c\}$, we write $ab$ instead of $G(a,b)$ and use inverses to avoid cumbersome notation. We denote by $e_G$ the identity element of $\wtilde G$. For example, we write
$$\left\{G\in\calg:\ aba^{-1}b^{-1}=e_G\right\}$$
instead of the horror
$$\{G\in\calg:\ \exists x,y,e\in\nat\ (G(e,e)=e\land G(a,x)=e\land G(b,y)=e\land G(G(G(a,b),x),y)=e)\}.$$
\end{remark}

The following simple fact is worth recording.

\begin{prop}\label{p.group_op_basis}
\cite[Prop~3.6]{elekes2024generic}
For any $n\in\nat$ and finite sets $\{u_0,\ldots,u_{k-1}\}$ and $\{v_0,\ldots,v_{l-1}\}$ of words in the variables $x_0,\ldots,x_{n-1}$ and numbers $a_0,\ldots,a_{n-1},b_0,\ldots,b_{k-1},c_0,\ldots,c_{l-1}\in\nat$~the~set
%$$\left\{G\in\calg:\ \left(\bigwedge_{i=0}^{k-1} u_i(a_0,\ldots,a_{n-1})=b_i\right)\land\left(\bigwedge_{j=0}^{l-1}v_j(a_0,\ldots,a_{n-1})\neq c_j\right)\right\}$$
$$\left\{G\in\calg:\ \forall i<k\ (u_i(a_0,\ldots,a_{n-1})=b_i)\text{ and }\forall j<l\ (v_j(a_0,\ldots,a_{n-1})\neq c_j)\right\}$$

is clopen, and sets of this form constitute a basis for $\calg$.
\end{prop}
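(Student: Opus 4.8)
The plan is to reduce everything to a single continuity statement: for a fixed word $u$ in $x_0,\ldots,x_{n-1}$ and fixed numbers $a_0,\ldots,a_{n-1}$, the \emph{evaluation map} $\mathrm{ev}_u:\calg\to\nat$, $G\mapsto u(a_0,\ldots,a_{n-1})$ (the value of $u$ in $\wtilde G$), is continuous, where $\nat$ carries the discrete topology. Granting this, the set in the statement is
$$\bigcap_{i<k}\mathrm{ev}_{u_i}^{-1}(\{b_i\})\ \cap\ \bigcap_{j<l}\mathrm{ev}_{v_j}^{-1}(\nat\setminus\{c_j\}),$$
a finite intersection of preimages of subsets of a discrete space under continuous maps, hence clopen. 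This is cleaner than the existential rewriting sketched in the Remark, which only manifestly yields openness; the closedness is precisely what continuity into a discrete space packages, and it ultimately rests on the uniqueness of identities and inverses in a group.

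To prove continuity of $\mathrm{ev}_u$ I would induct on the length of the reduced word $u$, showing each evaluation map is locally constant. The base cases are the empty word, a single generator $x_i$, and a single inverse $x_i^{-1}$. For the empty word $\mathrm{ev}_u=e_G$, and $\{G:e_G=n\}=\calg_n$ is clopen, so $G\mapsto e_G$ is locally constant. For $x_i$ the value is the constant $a_i$. For $x_i^{-1}$ the value is the inverse of $a_i$ in $\wtilde G$; fixing $G_0$ with identity $e_0$ and $z_0=a_i^{-1}$ in $\wtilde{G_0}$, on the open neighborhood $\calg_{e_0}\cap\{G:G(a_i,z_0)=e_0\}$ the element $z_0$ is a right inverse of $a_i$ relative to $e_0=e_G$, hence the two-sided inverse, so $\mathrm{ev}_{x_i^{-1}}\equiv z_0$ there. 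For the inductive step I use that $\calg\times\nat\times\nat\to\nat$, $(G,p,q)\mapsto G(p,q)$, is continuous (it is constant on the basic open set fixing $p$, $q$, and the value $G(p,q)$); writing $u=u'y$ with $y$ a single letter gives $\mathrm{ev}_u(G)=G\bigl(\mathrm{ev}_{u'}(G),\mathrm{ev}_y(G)\bigr)$, so $\mathrm{ev}_u$ is the composition of the continuous map $G\mapsto(G,\mathrm{ev}_{u'}(G),\mathrm{ev}_y(G))$ with the operation map, hence continuous.

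For the basis claim I would observe that it suffices to recover the standard basic clopen sets of $\calg$ inherited from $\nnn$, namely $\{G:\ G(p_s,q_s)=r_s,\ s<m\}$. Given such a set, choose $n$ larger than every $p_s$ and $q_s$, put $a_i=i$ for $i<n$, and take the word equations $u_s:=x_{p_s}x_{q_s}$ with $b_s:=r_s$ and no inequalities; then $u_s(a_0,\ldots,a_{n-1})=G(p_s,q_s)$, so the set of the stated form coincides exactly with the given basic set. Since the standard basic sets already form a basis and each one belongs to our family (all of whose members are open), the family is a basis.

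The only genuine obstacle is the inverse letter in the induction: unlike the operation itself, $a\mapsto a^{-1}$ is not a coordinate of $G$ but a derived quantity, so its continuity must be extracted from the group axioms. The crux is that within $\calg$ the identity is the unique idempotent — giving both the clopen splitting $\calg=\bigsqcup_n\calg_n$ and the continuity of $G\mapsto e_G$ — and that a one-sided inverse is automatically two-sided, so a single equation $G(a,z_0)=e_G$ pins the inverse down locally; the rest is routine bookkeeping over the word.
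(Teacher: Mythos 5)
Your proof is correct and complete. Note that the paper itself offers no proof of this proposition --- it is quoted verbatim from \cite[Prop~3.6]{elekes2024generic}, and the surrounding remark only indicates how expressions involving $e_G$ and inverses unfold into existential conditions on the function $G$. Your inductive argument that each evaluation map $\mathrm{ev}_u:\calg\to\nat$ is locally constant is a clean repackaging of that same mechanism rather than a fundamentally new idea: the existential rewriting shows directly that each fiber $\{G\in\calg:\ u(a_0,\ldots,a_{n-1})=b\}$ is open, and since for fixed $u$ and $a_0,\ldots,a_{n-1}$ these fibers partition $\calg$ as $b$ ranges over $\nat$, each one is automatically clopen (its complement is the union of the other, open, fibers). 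So your framing that the rewriting ``only manifestly yields openness'' while closedness requires a separate idea slightly undersells it --- the partition observation supplies closedness with no further input, and it is exactly equivalent to your continuity-into-a-discrete-space formulation. Where your write-up genuinely adds value is in isolating precisely where the group axioms enter: uniqueness of the idempotent (giving the clopen splitting $\calg=\bigsqcup_n\calg_n$ and local constancy of $G\mapsto e_G$) and the fact that a one-sided inverse relative to a fixed identity is two-sided (handling $\mathrm{ev}_{x_i^{-1}}$ with the single clopen condition $G(a_i,z_0)=e_0$ on $\calg_{e_0}$). Your treatment of the basis claim is also exactly right: recovering the standard basic sets $\{G\in\calg:\ G(p_s,q_s)=r_s,\ s<m\}$ via the words $x_{p_s}x_{q_s}$ with $a_i=i$ and $n$ large, and then noting that a family of open sets containing a basis is itself a basis, closes the argument with no gaps.
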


\begin{remark}\label{r.GKL_space}
A variant $\calg'$ of the space of group operations was defined in \cite{goldbring2023generic} as the set of all triples $(\mu,\iota,e)\in\nnn\times\nat^\nat\times\nat$ that satisfy the group axioms when interpreted as the multiplication map, the inversion map, and the identity element on the underlying set $\nat$, respectively.\footnote{This space was denoted by $\calg$ in \cite{goldbring2023generic}, which we replaced by $\calg'$ to avoid conflicting notation.} By simply reading off the inversion map and the identity element from the multiplication map, we obtain a natural bijection $\calg\to\calg'$, $G\mapsto (G,\iota_G,e_G)$. It is easy to check that this is a homeomorphism.
\end{remark}

In the rest of the paper, we use the equivalence of $\calg$, $\calg'$, and $\calg_n$ (for any $n\in\nat$) implicitly by treating generic group properties in any isomorphism-invariant subspace of any of the spaces $\calg$, $\calg'$, and $\calg_n$ as a generic group property in each of the corresponding subspaces.

\subsection{The space of marked groups}
\label{ss.marked_groups}

As mentioned above, the space of marked groups leverages the simple fact that every countable group is isomorphic to a quotient of $F_\infty$. Let us consider
$$\calm=\{N\subseteq F_\infty:\ N\nsub F_\infty\}$$
as a subspace of the Cantor space $2^{F_\infty}$.
It is straightforward to check that $\calm$ is closed in $2^{F_\infty}$, hence compact. We call $\calm$ \emphd{the space of marked groups}. In $\calm$, a \emphd{group property} is an isomorphism-invariant subset $\calp\subseteq\calm$ in the sense that $M\in\calp$ and $F_\infty/M\cong F_\infty/N$ implies $N\in\calp$ for any $N,M\in\calm$.

\begin{notation}\label{n.associated_group_prop}
For a group property $\calp\subseteq\calg$, we denote by $\calp^*$ the corresponding group property in $\calm$, that is,
$$\calp^*=\left\{N\in\calm:\ \exists G\in\calp\ \left(\wtilde G\cong F_\infty/N\right)\right\}.$$
\end{notation}

\begin{remark}\label{r.marked_basis}
Note that sets of the form
$$\calu=\{N\in\calm:\ u_0,\ldots,u_{k-1}\in N\text{ and } v_0,\ldots,v_{l-1}\notin N\}$$
with $u_0,\ldots,u_{k-1},v_{0},\ldots,v_{l-1}\in F_\infty$ and $k,l\in\nat$ constitute a clopen basis for $\calm$.
\end{remark}

\begin{remark}\label{r.fin_gen_marked}
Spaces of marked groups are usually introduced to study finitely generated groups, in which case $F_\infty$ is replaced by $F_n$ for some $n\in\nat$. Since we are interested in not finitely generated groups as well, we use $F_\infty$.
\end{remark}

\subsection{Two spaces of marked abelian groups}\label{ss.marked_abelian}

There are two natural ways to define the space of marked abelian groups. We can view countable abelian groups as quotients of the free abelian group $F=\bigoplus_{i\in\nat}\integer$, which gives us
$$\cala''=\{N\subseteq F:\ N\leq F\}$$
as a closed subset of the Cantor space $2^F$. The other option is to use
$$\cala'=\{N\in\calm:\ F_\infty/N\text{ is abelian}\},$$
which is closed in $\calm$. The following simple proposition shows that they are equivalent. Let $\psi:F_\infty\to F$ be the unique homomorphism that maps $x_i$ to $e_i$ for every $i\in\nat$, where $e_i(i)=1$ and $e_i(j)=0$ if $i\neq j$.
\begin{prop}\label{p.abelian_equiv}
The map $\Psi:\cala''\to\cala'$, $\Psi(N)=\psi^{-1}(N)$ is a homeomorphism and $F/N\cong F_\infty/\psi^{-1}(N)$ holds for every $N\in\cala''$.
\end{prop}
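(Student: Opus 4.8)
The plan is to recognize $\Psi$ as the pullback of indicator functions along $\psi$ and to identify $\cala'$ via the correspondence (lattice isomorphism) theorem, after which every claim becomes a one-line verification. The key algebraic observation I would record first is that $\ker\psi=[F_\infty,F_\infty]$, since $F$ is the abelianization of $F_\infty$ and $\psi$ realizes the canonical surjection $F_\infty\to F_\infty^{\mathrm{ab}}\cong F$. Consequently a normal subgroup $M$ of $F_\infty$ has abelian quotient precisely when $M\supseteq[F_\infty,F_\infty]=\ker\psi$; moreover any subgroup containing $[F_\infty,F_\infty]$ is automatically normal (for $g\in F_\infty$ and $m\in M$ one has $gmg^{-1}=[g,m]\,m\in M$). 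Thus I would rewrite $\cala'=\{M\leq F_\infty:\ker\psi\subseteq M\}$, which converts the whole statement into a question about subgroups containing a fixed kernel.

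With this in hand, the isomorphism $F/N\cong F_\infty/\psi^{-1}(N)$ is immediate from the first isomorphism theorem applied to the composite $F_\infty\to F\to F/N$ (first map $\psi$, then the quotient): this composite is surjective because $\psi$ is, and its kernel is exactly $\psi^{-1}(N)$. For bijectivity I would invoke the correspondence theorem for the surjection $\psi$: the assignments $N\mapsto\psi^{-1}(N)$ and $M\mapsto\psi(M)$ are mutually inverse bijections between the subgroups of $F$ and the subgroups of $F_\infty$ containing $\ker\psi$, that is, between $\cala''$ and $\cala'$ by the previous paragraph. This single step simultaneously shows that $\Psi$ lands in $\cala'$ (including normality of $\psi^{-1}(N)$, as the preimage of a subgroup of the abelian group $F$) and that it is a bijection.

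Continuity of $\Psi$ is then a one-coordinate check: viewing each subgroup as its indicator function in $2^{F_\infty}$ or $2^{F}$, the coordinate of $\Psi(N)=\psi^{-1}(N)$ at a word $w\in F_\infty$ equals the coordinate of $N$ at $\psi(w)\in F$, so the preimage of each subbasic clopen set $\{M:w\in M\}$ is the clopen set $\{N:\psi(w)\in N\}$. The only point that genuinely requires care is continuity of the inverse: I would avoid checking it by hand (which would mean, for each $v\in F$, choosing a preimage $w$ with $\psi(w)=v$ and using $\ker\psi\subseteq M$ to see that $v\in\psi(M)\iff w\in M$) and instead observe that $\cala''$ is compact, being closed in the Cantor space $2^{F}$, while $\cala'$ is Hausdorff as a subspace of $2^{F_\infty}$; a continuous bijection from a compact space to a Hausdorff space is automatically a homeomorphism. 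This finishes the proof, so the statement carries no real obstacle beyond correctly pinning down $\ker\psi$ as the commutator subgroup and routing the inverse-continuity through compactness.
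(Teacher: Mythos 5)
Your proof is correct and follows essentially the same route as the paper's: the quotient isomorphism via an isomorphism theorem, the identification $\ker\psi=[F_\infty,F_\infty]$ to get bijectivity onto $\cala'$, continuity via the subbasic clopen sets $\{N:\psi(w)\in N\}$, and compactness of $\cala''$ to upgrade the continuous bijection to a homeomorphism. Your packaging of injectivity and surjectivity through the correspondence theorem is a cosmetic variation on the paper's direct verification.
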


\begin{proof}
The isomorphism $F/N\cong F_\infty/\psi^{-1}(N)$ follows from the third isomorphism theorem, which also shows that $\Psi$ indeed maps to $\cala'$. Injectivity is clear from the surjectivity of $\psi$. For surjectivity, observe that $\ker\psi$ is the commmutator subgroup of $F_\infty$, hence every $N\in\cala'$ is union of cosets of $\ker\psi$, which implies $\Psi(\psi(N))=\psi^{-1}(\psi(N))=N$.

Since $\cala''$ is compact, it remains to check that $\Psi$ is continuous. But this is clear since $\Psi$ maps subbasic open sets onto subbasic open sets: $\psi(w)\in N\iff w\in\psi^{-1}(N)=\Psi(N)$.
\end{proof}

\subsection{The space of compact metrizable abelian groups}\label{ss.compact_metriz_ab}

For any topological space $X$, let $\calk(X)$ denote the set of all nonempty compact subsets of $X$. The topology on $\calk(X)$ generated by sets of the form
$$\{K\in \calk(X):\ K\subseteq U\}\quad\text{and}\quad\{K\in \calk(X):\ K\cap V\neq\emptyset\}$$
with $U,V\subseteq X$ open is called the \emphd{Vietoris topology}. It is well-known that $\calk(X)$ inherits several topological properties of $X$. For example, if $X$ is compact metrizable, then $\calk(X)$ is also compact metrizable \cite[Thm~4.26]{kechris2012classical}. It is easy to prove that for any topological group $G$, the set
$$\cals(G)=\{K\in\calk(G):\ K\text{ is a subgroup of }G\}$$
is closed in $\calk(G)$ (see \cite[Prop~2.4]{elekes2024generic}).

Let $\circle$ denote the circle group. It is well-known that every compact metrizable abelian group can be embedded into $\circle^\nat$ (see, for example, the beginning of Section~4.2 in \cite{elekes2024generic}). Thus, the compact metrizable space $\cals(\TT)$ can be viewed as \emphd{the space of compact metrizable abelian groups}. It is also easy to check that the subspace
$$\calc(\TT)=\left\{K\in\cals(\TT):\ K\text{ is connected}\right\}$$
is closed in $\cals(\TT)$. Thus, we may view the compact metrizable space $\calc(\TT)$ as \emphd{the space of connected compact metrizable abelian groups}.

\subsection{Pontryagin duality}

The \emphd{dual group} $\what G$ of a locally compact abelian (LCA) group $G$ is the set of all continuous homomorphisms from $G$ to $\circle$ with pointwise multiplication and the compact-open topology. It is well-known that $\what G$ is also an LCA group, and $\widehatto{G}{\widehat G}\cong G$ holds for any LCA group $G$. For a closed subgroup $H$ of an LCA group $G$, the \emphd{annihilator} of $H$ is
$$\ann(H)=\left\{\chi\in\what G:\ \chi|_H\equiv 0\right\},$$
and $\what H\cong\what G/\ann (H)$ holds. See \cite{rudin1962fourier} for the proofs. We will also need the following facts.

\begin{prop}\label{p.pontryagin}
The following hold.
\begin{enumerate}
    \item \cite[Thm~2.2.3]{rudin1962fourier} For any sequence $(A_i)_{i\in\nat}$ of compact abelian groups, $\ds\what{\prod_{i\in\nat}A_i}=\bigoplus_{i\in\nat}\what A_i$.
    \item \cite[Subsec 25.2]{hewitt1979abstract} The dual of the Prüfer $p$-group $\integer[p^\infty]$ is the group $Z_p$ of $p$-adic integers.
    \item The dual of $\ds\bigoplus_{i\in\nat}(\rat/\integer)$, which can be written as $\ds\bigoplus_{p\text{ prime}}\bigoplus_{i\in\nat}\integer[p^\infty]$, is $\ds\prod_{p\text{ prime}}(Z_p)^\nat$. (This follows from (1) and (2).)
    \item \cite[Subsec 25.4]{hewitt1979abstract} The dual of $(\rat,+)$ is the universal solenoid.
    \item \cite[Cor~8.5]{hofmann2013structure} An LCA group $G$ is discrete and torsion-free if and only if $\what G$ is compact and connected.
\end{enumerate}
\end{prop}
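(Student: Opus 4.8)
The plan is to treat items (1), (2), (4), and (5) as verbatim quotations of the cited results, so the only part that calls for an argument is item~(3), which the statement itself advertises as a consequence of (1) and (2).

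First I would record the structural decomposition of $\rat/\integer$. Since $\rat/\integer$ is a torsion abelian group, its primary decomposition identifies the $p$-primary component as the Prüfer group $\integer[p^\infty]$, giving $\rat/\integer\cong\bigoplus_{p\text{ prime}}\integer[p^\infty]$. Taking a countable direct sum and reindexing the resulting double sum (legitimate, since the direct sum is a coproduct) yields
$$\bigoplus_{i\in\nat}(\rat/\integer)\cong\bigoplus_{p\text{ prime}}\bigoplus_{i\in\nat}\integer[p^\infty],$$
which is the second description appearing in (3).

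Next I would establish the dual-of-a-direct-sum formula that is, in effect, the dual of (1). For a countable family of discrete abelian groups $(B_j)_j$, the groups $A_j=\what{B_j}$ are compact, so (1) gives $\what{\prod_j A_j}=\bigoplus_j\what{A_j}=\bigoplus_j B_j$, where the last equality uses Pontryagin duality $\what{\what{B_j}}\cong B_j$. Dualizing both sides and applying Pontryagin duality once more gives
$$\what{\bigoplus_j B_j}\cong\prod_j\what{B_j}.$$
Applying this to the countable family $\integer[p^\infty]$ indexed by pairs $(p,i)$ and invoking (2), namely $\what{\integer[p^\infty]}\cong Z_p$, I conclude
$$\what{\bigoplus_{p\text{ prime}}\bigoplus_{i\in\nat}\integer[p^\infty]}\cong\prod_{p\text{ prime}}\prod_{i\in\nat}Z_p=\prod_{p\text{ prime}}(Z_p)^\nat,$$
which is exactly (3).

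The only genuine subtlety — and thus the step I expect to be the main obstacle — is the passage from (1) to the dual-of-sum formula: one must check that the identifications are compatible with the topologies, i.e.\ that $\bigoplus_j B_j$ carries the discrete topology (so every character is automatically continuous and is determined by its restrictions to the summands) while $\prod_j\what{B_j}$ is compact, matching the fact that the dual of a discrete group is compact. Once this bookkeeping is in place, everything reduces to (1), (2), and Pontryagin duality, with no further computation required.
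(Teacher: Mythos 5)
Your proposal is correct and takes essentially the same route as the paper: the paper treats (1), (2), (4), (5) as citations and disposes of (3) with the one-line remark that it follows from (1) and (2), implicitly via exactly the steps you spell out --- the primary decomposition $\rat/\integer\cong\bigoplus_{p\text{ prime}}\integer[p^\infty]$ (already built into the statement of (3)) and dualizing (1) through Pontryagin double duality to get $\what{\bigoplus_j B_j}\cong\prod_j\what{B_j}$. Your topological bookkeeping (the sum of discrete groups is discrete, the product of compacts is compact) is the standard justification and is sound, so your write-up simply fills in the details the paper leaves to the reader.
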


The following theorem is a special case of \cite[Thm~14]{fisher2009space}. It establishes a beautiful connection between the spaces $\cals(\TT)$ and $\cala''$ via Pontryagin duality.

\begin{theorem}\label{t.pontryagin_homeo}
\cite[Thm~14]{fisher2009space} The annihilator map $\ann:\cals(\TT)\to\cala''$, $K\mapsto \ann(K)$ is a homeomorphism, and $F/\ann(K)\cong \what K$ for every $K\in\cals(\TT)$. In particular, since Pontryagin duality preserves isomorphism, so does $\ann$.
\end{theorem}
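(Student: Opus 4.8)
The plan is to realize $\ann$ as a continuous bijection between two compact metrizable spaces and then to invoke the fact that a continuous bijection from a compact space onto a Hausdorff space is automatically a homeomorphism. First I would fix notation and dispose of the easy parts. By Proposition~\ref{p.pontryagin}(1) together with $\what\circle\cong\integer$, the dual of $\circle^\nat$ is $\bigoplus_{i\in\nat}\integer=F$; for $w\in F$ I write $\chi_w\colon\circle^\nat\to\circle$ for the associated character. For $K\in\cals(\TT)$ the set $\ann(K)=\{\chi\in\what{\circle^\nat}:\chi|_K\equiv 0\}$ is a subgroup of $F$, so $\ann$ indeed lands in $\cala''$, and $F/\ann(K)\cong\what K$ is exactly the annihilator-quotient isomorphism $\what H\cong\what G/\ann(H)$ recalled before the statement. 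Since Pontryagin duality respects isomorphism, so does $\ann$; the remaining content is that $\ann$ is a homeomorphism.

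For bijectivity I would use the standard annihilator correspondence for the dual pair $(\circle^\nat,F)$ coming from biduality $\what{\what G}\cong G$. This yields $\ann(\ann(K))=K$ for every compact subgroup $K\le\circle^\nat$, while for $N\le F$ the group $K_N=\{t\in\circle^\nat:\chi(t)=0\text{ for all }\chi\in N\}$ is closed, hence compact, and satisfies $\ann(K_N)=N$. Because $F$ is discrete, every subgroup of $F$ is closed, so $N\mapsto K_N$ is a genuine two-sided inverse defined on all of $\cala''$, and $\ann$ is a bijection.

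The heart of the argument, and the step I expect to be the main obstacle, is continuity of $\ann$: it suffices to check that the preimages of the subbasic clopen sets $\{N:w\in N\}$ and $\{N:w\notin N\}$ of $\cala''\subseteq 2^F$ are open. The preimage of $\{N:w\notin N\}$ is $\{K:\chi_w|_K\not\equiv 0\}=\{K:K\cap U\ne\emptyset\}$ with $U=\{t:\chi_w(t)\ne 0\}$ open, so it is Vietoris-open. The preimage of $\{N:w\in N\}$ is $\{K:K\subseteq\ker\chi_w\}$, and here $\ker\chi_w$ is only closed, so I must exploit that $K$ is a \emph{subgroup}. Let $W=\chi_w^{-1}(A)$, where $A$ is the open arc of $\circle$ of points at distance $<1/4$ from the identity $0$. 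If $K\subseteq W$, then $\chi_w(K)$ is a subgroup of $\circle$ contained in $A$; since every nontrivial subgroup of $\circle$ (finite cyclic of order $\ge 2$, or dense) contains a point at distance $\ge 1/3$ from $0$, this forces $\chi_w(K)=\{0\}$, i.e.\ $K\subseteq\ker\chi_w$, and the reverse inclusion is trivial. Hence $\{K:K\subseteq\ker\chi_w\}=\{K\in\cals(\TT):K\subseteq W\}$ is Vietoris-open. This ``no small subgroups'' phenomenon for $\circle$ is precisely what turns the closed condition $K\subseteq\ker\chi_w$ into an open one on the subspace of subgroups, and it is the only place where one must go beyond formal duality. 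With $\ann$ shown to be a continuous bijection between the compact metrizable spaces $\cals(\TT)$ and $\cala''$, it is a homeomorphism, and the isomorphism statement has already been recorded.
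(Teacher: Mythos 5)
Your proof is correct, and it is worth noting that the paper itself offers no proof of this statement: it is quoted from \cite[Thm~14]{fisher2009space}, where it is established in the generality of arbitrary compact metrizable groups. Your argument is therefore a genuinely self-contained substitute for the citation, specialized to $\circle^\nat$. The routine parts are handled as they should be: $\what{\circle^\nat}\cong F$ via Proposition~\ref{p.pontryagin}(1) and $\what\circle\cong\integer$; the isomorphism $F/\ann(K)\cong\what K$ is the recalled fact $\what H\cong\what G/\ann(H)$; and bijectivity follows from the bi-annihilator relations together with the observation that every subgroup of the discrete group $F$ is closed, so that $N\mapsto K_N$ is a two-sided inverse on all of $\cala''$. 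The real content is your continuity check, and it is sound. The preimage of $\{N:\ w\notin N\}$ is the Vietoris hitting set of the open set $U=\{t:\ \chi_w(t)\neq 0\}$, and for $\{N:\ w\in N\}$ your no-small-subgroups trick is exactly the right idea: a subgroup of $\circle$ contained in the open arc $A$ of radius $1/4$ about $0$ must be trivial, since a nontrivial subgroup is either finite cyclic of order $n\geq 2$, hence contains the point $\lfloor n/2\rfloor/n$ at distance at least $1/3$ from $0$, or dense. Consequently $\{K:\ K\subseteq\ker\chi_w\}=\{K\in\cals(\TT):\ K\subseteq\chi_w^{-1}(A)\}$ is Vietoris-open on the subspace of subgroups even though $\ker\chi_w$ is merely closed; this is indeed the only point where one must go beyond formal duality. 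Finally, invoking compactness of $\cals(\TT)$ (closed in the compact space $\calk(\circle^\nat)$) and the Hausdorffness of $\cala''\subseteq 2^F$ to upgrade the continuous bijection to a homeomorphism is a legitimate shortcut that spares you a direct verification of openness. What your approach buys is a short, elementary proof readable without the machinery of the cited paper; what it gives up is the generality of Fisher and Gartside's theorem, which is not needed here since the paper only ever applies the result to $\cals(\TT)$ and $\calc(\TT)$.
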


\section{Results}\label{s.results}

In this section, we describe two connections between the space $\calg$ of group operations and the space $\calm$ of marked groups. The latter --- Theorem~\ref{t.main} --- is the main result of the paper.

\begin{defi}
For a group property $\cals\subseteq\calm$, let $(\star)_\cals$ denote the following condition.
\begin{quote}
Every countable marked group with property $\cals$ embeds into an infinite marked group with property $\cals$. That is, for every $M\in\cals$ there is $N\in\cals$ such that $F_\infty/N$ is infinite and $F_\infty/M$ embeds into $F_\infty/N$.
\end{quote}
\end{defi}

Notice that $(\star)_\cals$ is an extremely weak condition. We will need the following lemma.

\begin{lemma}\label{l.open_dense}
Let $\cals\subseteq\calm$ be a group property for which $(\star)_\cals$ holds. For every $m\in\nat$ let
$$\cald_m=\{N\in\calm:\ |F_\infty/N|\geq m \text{ and }\forall i<m\ (|w_iN\cap X_\infty|\geq m)\}$$
and let $\cald=\bigcap_{m\in\nat}\cald_m$.
Then for every $m\in\nat$ the set $\cald_m\cap\cals$ is dense open in $\cals$. In particular, $\cald\cap\cals$ is comeager in $\cals$.
\end{lemma}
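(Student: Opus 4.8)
The plan is to treat openness and density separately, and to note that the final claim is then immediate: once each $\cald_m\cap\cals$ is dense open in $\cals$, its complement in $\cals$ is closed with empty interior, hence nowhere dense, so $\cald\cap\cals=\bigcap_{m}(\cald_m\cap\cals)$ is comeager in $\cals$ by definition. For openness it suffices to show that $\cald_m$ is open already in $\calm$. Unwinding the definition, $|F_\infty/N|\ge m$ holds exactly when some tuple $w_{i_1},\dots,w_{i_m}$ is pairwise distinct modulo $N$, i.e.\ $w_{i_s}w_{i_t}^{-1}\notin N$ for $s\ne t$; for a fixed tuple this is basic clopen, and the union over all (countably many) tuples is open. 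Likewise, for each $i<m$ the condition $|w_iN\cap X_\infty|\ge m$ asserts the existence of distinct generators $x_{j_1},\dots,x_{j_m}$ with $w_i^{-1}x_{j_s}\in N$ for all $s$, again a union of basic clopen sets. Intersecting these finitely many open conditions shows $\cald_m$ is open.

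The substantive part is density. I fix a nonempty basic open set $\calu=\{N:\ u_0,\dots,u_{k-1}\in N,\ v_0,\dots,v_{l-1}\notin N\}$ meeting $\cals$, choose $M\in\calu\cap\cals$, and aim to produce $N\in\cald_m\cap\cals\cap\calu$. First I invoke $(\star)_\cals$ to obtain $N_0\in\cals$ with $H:=F_\infty/N_0$ infinite together with an embedding $\iota:F_\infty/M\hookrightarrow H$. The heart of the argument is to construct a surjective homomorphism $\phi:F_\infty\to H$ (a new marking of $H$) and set $N:=\ker\phi$, so that $F_\infty/N\cong H\cong F_\infty/N_0$ and hence $N\in\cals$ by isomorphism invariance. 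I partition the generators into three roles. Picking $d$ so large that $u_0,\dots,u_{k-1},v_0,\dots,v_{l-1},w_0,\dots,w_{m-1}$ are words in $x_0,\dots,x_{d-1}$, I put $\phi(x_j)=\iota(x_jM)$ for $j<d$; then each such word $w$ satisfies $\phi(w)=\iota(wM)$, so injectivity of $\iota$ transfers the constraints $u_i\in M$ and $v_j\notin M$ to $u_i\in N$ and $v_j\notin N$, giving $N\in\calu$. Next I take $m$ pairwise disjoint blocks of $m$ fresh generators and send the $i$-th block to the element $\iota(w_iM)=\phi(w_i)$; this forces at least $m$ generators into the coset $w_iN$, i.e.\ $|w_iN\cap X_\infty|\ge m$ for every $i<m$. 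Finally I map all remaining generators onto an enumeration of $H$, making $\phi$ surjective; since $H$ is infinite, $|F_\infty/N|\ge m$, so $N\in\cald_m$, completing the density argument.

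I expect the density step to be the main obstacle, and within it the simultaneous control of three competing demands on the single homomorphism $\phi$: preserving the finitely many local constraints that define $\calu$, injecting enough redundancy so that each of the first $m$ cosets meets at least $m$ generators, and keeping $\phi$ surjective so that the isomorphism type of $F_\infty/N$ --- and thus membership in $\cals$ --- is pinned down. The three-way partition of $X_\infty$ into a finite constraint block (whose values come from the original marking of $F_\infty/M$ composed with $\iota$), finitely many disjoint redundancy blocks, and a cofinite surjectivity block is precisely what renders these demands compatible. The role of the weak hypothesis $(\star)_\cals$ is equally precise: it supplies an infinite host group $H$ lying in the same property class, which is exactly what lets the size requirement $|F_\infty/N|\ge m$ coexist with the local constraints and the redundancy.
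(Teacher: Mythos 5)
Your proposal is correct and follows essentially the same route as the paper: openness by writing $\cald_m$ as a countable union of basic clopen sets witnessing the two cardinality conditions, and density by invoking $(\star)_\cals$ to obtain an infinite group $H$ in the property class and constructing a surjective homomorphism $F_\infty\to H$ whose kernel lies in $\cald_m\cap\calu\cap\cals$. The only cosmetic differences are that the paper factors through $M=N_0\cap F_n$ and marks \emph{every} element of $H$ with at least $m$ generators, whereas you define the marking via the embedding directly on the first $d$ generators and add redundancy only on the cosets $w_0N,\ldots,w_{m-1}N$, which equally suffices.
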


\begin{proof}
Fix any $m\in\nat$ and let $[\nat]^m$ denote the set of all $n$-element subsets of $\nat$. We write $\cald_m$ as
$$\cald_m=\bigcup_{i_0,\ldots,i_{m-1}
\in\nat}\ \bigcup_{\left\{j_0^0,\ldots,j_{m-1}^0\right\},\ldots,\left\{j_0^{m-1},\ldots,j_{m-1}^{m-1}\right\}\in[\nat]^m}\ \bigcap_{s,t<m}\left\{N\in\calm:\ w_{i_s}w_{i_t}^{-1}\notin N\text{ and } x_{j_t^s} w_s^{-1}\in N\right\},$$
which shows that it is open. Thus it suffices to prove that $\cald_m$ is dense in $\cals$.

Fix any nonempty basic clopen set
$$\calu=\{N\in\cals:\ u_0,\ldots,u_{k-1}\in N\text{ and } v_0,\ldots,v_{l-1}\notin N\}$$
and any $N_0\in\calu$. Let $n\in\nat$ be such that $X_n=\{x_0,\ldots,x_{n-1}\}$ contains all letters occurring in $u_0,\ldots,u_{k-1}, v_0,\ldots,v_{l-1}$, let $M=N_0\cap F_n$, and let $\rho:F_n\to F_n/M$ denote the quotient map. By $(\star)_\cals$, there is a normal subgroup $N_0'\in\cals$ such that $F_\infty/N_0$ embeds into $H=F_\infty/N_0'$ and $H$ is infinite. Since $F_n/M$ is isomorphic to a subgroup of $F_\infty/N_0$, there is an embedding $\alpha:F_n/M\to H$. This defines a marking $\mu:X_n\to \alpha(F_n/M)$. Let us mark the elements of $H$ with the remaining generators $x_n,x_{n+1},\ldots$ so that every element is marked by at least $m$ many generators. This marking $X_\infty\to H$ extends to a homomorphism $\varphi:F_\infty\to H$. Let $L=\ker\varphi$. It suffices to verify that $L\in\cald_m\cap\calu.$

First, $L\in\cald_m$ is clear from the definition of $\varphi$ and the fact that $H$ is infinite. Since $\cals$ is isomorphism invariant, $L\in\cals$ follows from $F_\infty/L\cong H\cong F_\infty/N_0'$ and $N_0'\in\cals$. For $L\in\calu$, note that $L\cap F_n=\ker(\varphi|_{F_n})=\ker(\alpha\circ\rho)=M$, where the second equality holds because $\varphi|_{F_n}$ and $\alpha\circ\rho$ extend the same marking $\mu$.
\end{proof}

Observe that $\calg$ and $\calm$ are not homeomorphic since $\calm$ is compact and $\calg$ is not. (Even the subspaces $\calg_n$ are non-compact.)

However, we do have a natural map $$\Phi:\calg\to\calm,\ \Phi(G)=\{w(x_0,\ldots,x_{n-1})\in F_\infty:\ w(0,\ldots,n-1)=e_G\}.$$
Note that $\Phi(G)\in\calm$ since $\Phi(G)$ is the kernel of the homomorphism $\varphi_G:F_\infty\to\wtilde G$ that extends the marking $x_i\mapsto i$.
The analogous map for $\calg'$ was defined in \cite{goldbring2023generic}, where Proposition~3.2.1 asserts that it is continuous and surjective. This is not true as the following proposition and Remark~\ref{r.GKL_space} show.

\begin{prop}\label{p.embedding}
For the map $\Phi:\calg\to\calm$ defined above, the following hold.
\begin{enumerate}
    \item[(1)] It is an embedding.
    \item[(2)] We have $\wtilde G\cong F_\infty/\Phi(G)$ for every $G\in\calg$.
    \item[(3)] $\ran(\Phi)=\{N\in\calm:\ \text{each coset in $F_\infty/N$ contains exactly one of the generators $x_0,x_1,\ldots$}\}$.
    \item[(4)] The set $\ran(\Phi)$ is nowhere dense in $\calm$.
\end{enumerate}
In particular, the map $\Phi$ cannot transfer generic properties between $\calg$ and $\calm$.
\end{prop}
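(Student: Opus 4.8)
The plan is to establish (2) first, since it underpins everything, and then to exploit the explicit description of $\Phi$ to read off (1), (3), and finally (4). For (2), note that by construction $\Phi(G)=\ker\varphi_G$, where $\varphi_G\colon F_\infty\to\wtilde G$ extends the marking $x_i\mapsto i$; this marking is onto the underlying set $\nat$ of $\wtilde G$, so $\varphi_G$ is surjective and the first isomorphism theorem gives $F_\infty/\Phi(G)=F_\infty/\ker\varphi_G\cong\wtilde G$. The single observation driving the rest is that $\Phi(G)$ literally encodes the multiplication table of $G$ by single words: for fixed $a,b,c\in\nat$ one has $x_ax_bx_c^{-1}\in\Phi(G)$ if and only if $G(a,b)=c$ (and $x_n\in\Phi(G)$ if and only if $n=e_G$).

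For (1), continuity of $\Phi$ follows because the preimage of each subbasic clopen set $\{N:w\in N\}$ is $\{G:w(0,\ldots,n-1)=e_G\}$, which is clopen once we split $\calg=\bigsqcup_n\calg_n$ and use that $e_G=n$ on $\calg_n$ together with Proposition~\ref{p.group_op_basis}. Injectivity and continuity of the inverse on $\ran(\Phi)$ both come from the displayed equivalence: the value $G(a,b)$ is recovered as the unique $c$ with $x_ax_bx_c^{-1}\in N$, so $\Phi$ is injective, and since the product topology on $\calg\subseteq\nnn$ is generated by the coordinate conditions $\{G:G(a,b)=c\}$, each of which pulls back under $\Phi^{-1}$ to the relatively clopen set $\{N\in\ran(\Phi):x_ax_bx_c^{-1}\in N\}$, the inverse is continuous. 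Hence $\Phi$ is a homeomorphism onto its image.

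For (3), the inclusion $\subseteq$ is immediate: writing $N=\ker\varphi_G$, a generator $x_i$ lies in the coset $wN$ iff $\varphi_G(x_i)=\varphi_G(w)$, i.e.\ iff $i=\varphi_G(w)\in\nat$, so each coset contains exactly the one generator $x_{\varphi_G(w)}$. Conversely, given $N$ with exactly one generator per coset, the map $\beta(i)=x_iN$ is a well-defined bijection $\nat\to F_\infty/N$, its surjectivity and injectivity being precisely the ``at least one'' and ``at most one'' halves of the hypothesis. Transporting the group law along $\beta$ via $G(a,b)=\beta^{-1}(x_ax_bN)$ produces a group operation $G\in\calg$ for which $\beta$ is an isomorphism $\wtilde G\to F_\infty/N$; since $\beta\circ\varphi_G$ and the quotient map $q$ agree on generators they coincide, and injectivity of $\beta$ gives $\Phi(G)=\ker\varphi_G=\ker q=N$.

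For (4), I would use (3) to reduce to exhibiting a dense open set disjoint from $\ran(\Phi)$, namely $W=\bigcup_{i\neq j}\{N:x_ix_j^{-1}\in N\}$; any $N\in\ran(\Phi)$ avoids $W$ because distinct generators lie in distinct cosets. Density of $W$ is the crux. Given a nonempty basic clopen $\calu$ cut out by $u_0,\ldots,u_{k-1}\in N$ and $v_0,\ldots,v_{l-1}\notin N$, I would pick $i\neq j$ larger than every index occurring in these words, fix $N_0\in\calu$ with quotient map $q\colon F_\infty\to F_\infty/N_0$, and set $\pi=q\circ\sigma$, where $\sigma$ is the endomorphism of $F_\infty$ sending $x_i\mapsto x_j$ and fixing all other generators. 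Then $\ker\pi\in\calm$ contains $x_ix_j^{-1}$, while $\sigma$ fixes every constraint word, so $\pi(u_s)=q(u_s)=e$ and $\pi(v_t)=q(v_t)\neq e$; thus $\ker\pi\in\calu\cap W$. The main obstacle is exactly this density step: one must realize a collision of two generators inside an arbitrary basic clopen set, and the point that makes it work is that the two colliding generators can be chosen \emph{fresh}, so redefining the marking on one of them forces the collision without disturbing any of the finitely many prescribed memberships and non-memberships.
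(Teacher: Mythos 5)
Your proof is correct. Parts (1)--(3) follow essentially the same route as the paper: the same key observations that $\Phi(G)=\ker\varphi_G$ and that $x_ax_bx_c^{-1}\in\Phi(G)\iff G(a,b)=c$, with continuity and openness read off by matching the bases of $\calg$ and $\calm$ (your transport-of-structure argument for the reverse inclusion in (3) is exactly the implicit content of the paper's ``follow easily'' remark).

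For (4), however, you take a genuinely different path. The paper simply cites Lemma~\ref{l.open_dense}: taking $\cals=\calm$ (the condition $(\star)_\calm$ is trivial), the set $\cald_2$ is dense open in $\calm$, and by (3) it is disjoint from $\ran(\Phi)$, since any $N\in\cald_2$ has a coset containing at least two generators while members of the range have exactly one generator per coset. You instead build a dense open set $W=\bigcup_{i\neq j}\{N:\ x_ix_j^{-1}\in N\}$ from scratch, proving density via the collapsing endomorphism $\sigma$ ($x_i\mapsto x_j$ for fresh indices $i,j$), which is a correct and pleasantly light construction: forcing a single collision of two fresh generators never disturbs the finitely many prescribed (non)memberships, whereas the density argument inside Lemma~\ref{l.open_dense} has to re-mark an entire infinite group so that \emph{every} coset carries many generators, and needs the embedding hypothesis $(\star)$ to do so. What the paper's route buys is economy and a stronger conclusion it needs anyway: the same lemma shows that the \emph{comeager} set $\cald$ --- the domain of the map $f$ in Theorem~\ref{t.main} --- is entirely disjoint from $\ran(\Phi)$, which is the precise sense in which $\Phi$ fails to transfer generic properties. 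What your route buys is self-containedness: your proof of (4) uses nothing beyond (3) and the basis of $\calm$, so it would survive even in a context where the $(\star)$-machinery is unavailable.
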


\begin{proof}
(1) \emphd{Injectivity.} If $i\cdot j=k$ holds in $\wtilde G$ but not in $\wtilde H$, then $x_ix_jx_k^{-1}\in\Phi(G)\setminus\Phi(H)$.

\emphd{Continuity and openness.} Note that for any $k\in\nat$, $w(x_0,\ldots,x_{n-1})\in F_\infty$ and $G\in\calg_k$, we have $w(x_0,\ldots,x_{n-1})\in \Phi(G)$ if and only if $w(0,\ldots,n-1)=k$ holds in $\wtilde G$. By the injectivity of $\Phi$, Proposition~\ref{p.group_op_basis}, and Remark~\ref{r.marked_basis}, it follows that $\Phi$ is continuous and open.

(2) and (3) follow easily from the observation above that $\Phi(G)=\ker\varphi_G$ for every $G\in\calg$.
%For any $G\in\calg$ the map $\alpha_G: \wtilde G\to F_\infty/\Phi(G)$, $i\mapsto x_i\Phi(G)$ is an injection that preserves relations back and forth. Since $\ran(\alpha_G)$ generates $F_\infty/\Phi(G)$, it is surjective as well.

(4) This follows from (3) and Lemma~\ref{l.open_dense}. 
\end{proof}

%The following observation will be useful in the proof of the main theorem.

%\begin{obs}\label{o.special_basis}
%Sets of the form
%$$\calu=\{N\in\cald:\ u_0,\ldots,u_{k-1}\in N\land v_0,\ldots,v_{l-1}\notin N\}$$
%constitute a clopen basis in $\cald$ even if we additionally require that
%\begin{itemize}
%    \item[(A)] $x_i\in N$ occurs in the definition of $\calu$ for some $i\in\nat$,
%    \item[(B)] there is $K\in\nat$ such that $\{0,\ldots,K-1\}=\{i\in\nat:\ x_i\text{ occurs in }u_0,\ldots,u_{k-1},v_0,\ldots,v_{l-1}\}$,
    %\item[(C)] for every $i<j<K$ the word $x_ix_j^{-1}$ is among $u_0,\ldots,u_{k-1},v_0,\ldots,v_{l-1}$.
%\end{itemize}
%\end{obs}

We turn to the main theorem. Recall from Lemma~\ref{l.open_dense} that $\cald$ is a dense $G_\delta$ set in $\calm$. 

\begin{theorem}\label{t.main}
There is a surjective open continuous map $f:\cald\to\calg$ such that $F_\infty/N\cong\wtilde{f(N)}$ for every $N\in\cald$.
\end{theorem}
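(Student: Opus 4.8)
The plan is to construct $f$ by, for each $N \in \cald$, reading off a group operation on $\nat$ from the quotient $F_\infty/N$. The key point is that the definition of $\cald$ guarantees exactly the data needed to do this canonically. Recall that $N \in \cald_m$ for every $m$ means: $F_\infty/N$ is infinite (since $|F_\infty/N| \geq m$ for all $m$), and every coset $w_i N$ contains infinitely many generators (since $|w_i N \cap X_\infty| \geq m$ for all $m$, for all relevant $i$). So on $\cald$, every element of the infinite quotient is represented by at least one generator, and I can pick a canonical one.

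\emph{Construction of $f$.} For $N \in \cald$, enumerate the cosets of $N$ using our fixed enumeration $F_\infty = \{w_0, w_1, \ldots\}$: let $g_0 N, g_1 N, \ldots$ list the distinct cosets in order of first appearance (i.e., $g_0 = w_0$, and $g_{k+1}$ is the first $w_i$ landing in a coset not yet seen). Because $F_\infty/N$ is infinite, this produces a bijection $k \mapsto g_k N$ between $\nat$ and the cosets. Define $f(N) \in \nnn$ by $f(N)(i,j) = k$, where $g_i g_j N = g_k N$. This is a group operation, and the map $k \mapsto g_k N$ is visibly an isomorphism $(\nat, f(N)) \to F_\infty/N$, giving $\wtilde{f(N)} \cong F_\infty/N$.

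\emph{Continuity, openness, surjectivity.} First I would check continuity: the value $f(N)(i,j)$ is determined once we know, for finitely many words, which pairs lie in/out of $N$ (enough to pin down $g_0, \ldots, g_{\max}$ and to decide the equation $g_i g_j g_k^{-1} \in N$); this is a condition depending on $N$ only through finitely many coordinates of $2^{F_\infty}$, so $f$ is continuous. For openness, I would use the basis from Proposition~\ref{p.group_op_basis}: a basic constraint on $f(N)$ of the form $u(0,\ldots,n-1) = b$ translates, via the isomorphism, into membership/non-membership conditions on $N$ involving the words $g_0, \ldots, g_{n-1}$ — the technical wrinkle is that \emph{which} words $g_i$ serve as representatives is itself determined by $N$, so the preimage is a union over the possible ``first-appearance patterns'' of basic clopen sets in $\cald$, hence open. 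Surjectivity follows from Proposition~\ref{p.embedding}(2)--(3): every $G \in \calg$ equals $\wtilde{G}$ with $\Phi(G) \in \ran(\Phi) \subseteq \cald$ (one checks $\Phi(G) \in \cald$ directly, as $\wtilde G$ is infinite and each coset of $\Phi(G)$ contains a generator, though \emph{not} infinitely many — so I may instead invoke surjectivity via $f \circ (\text{a copy construction})$, or simply verify $f(\Phi(G)) = G$ after normalizing the enumeration).

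\emph{Main obstacle.} I expect the canonical-representative bookkeeping in the openness proof to be the crux: openness requires showing the image of a basic clopen subset of $\cald$ is open in $\calg$, and the representatives $g_k$ depend discontinuously on $N$ (the ``first appearance'' can jump as coordinates flip). The clean fix is to define $f$ on a refinement: partition $\cald$ into clopen pieces according to the finite first-appearance pattern, verify $f$ is given by a fixed continuous/open rule on each piece, and assemble. Handling surjectivity cleanly — making sure the canonical enumeration can realize \emph{every} target operation, not just one in each isomorphism class — is the second delicate point; the natural remedy is to exploit that $\cald$ meets every isomorphism class (from $(\star)$ applied to the full property $\cals = \calm$, via Lemma~\ref{l.open_dense}) together with the fact that relabeling generators lets $f$ hit any fixed presentation.
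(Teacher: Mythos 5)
Your construction diverges from the paper's in one crucial respect, and the divergence is fatal to surjectivity (and, by the same mechanism, to openness). You select coset representatives by \emph{first appearance in the fixed enumeration of all of} $F_\infty$, whereas the paper selects the \emph{least-indexed generator} in each coset --- which exists and can be assigned freely precisely because $\cald$ guarantees infinitely many generators per coset, a feature your construction reads off correctly at the start and then never uses. The problem with first-appearance representatives is that the coset of a non-generator word is forced by the group law, so the first-appearance order is not free data. Concretely: the empty word equals $w_{i_\star}$ for some fixed index $i_\star$, and it lies in every $N$, so the identity coset of $F_\infty/N$ always appears among the first $i_\star+1$ cosets; hence the identity element of $\wtilde{f(N)}$ is at most $i_\star$ for every $N\in\cald$, and your $f$ misses every $\calg_n$ with $n>i_\star$ --- it is not surjective onto $\calg$. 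Nor is this a mere normalization issue: if the fixed enumeration begins $e, x_0, x_0^2,\ldots$ with the other generators occurring late, then for any $M$ with $x_0\notin M$ one gets $g_1=x_0$ and $f(M)(1,1)\in\{0,1,2\}$, so the first-appearance order imposes enumeration-dependent relations on $f(M)$ that rule out realizing arbitrary targets. The same realization problem sinks your openness plan: to show $f(\calu)$ is open you must, for each $H$ in a suitable neighborhood of $f(N_0)$, \emph{build} some $M\in\cald\cap\calu$ with $f(M)=H$, and your proposed fix (partitioning $\cald$ into clopen pieces by first-appearance pattern) only handles the local constancy of the bookkeeping --- it does nothing for this realization step, which is obstructed by forced words exactly as above. (Your parenthetical $\ran(\Phi)\subseteq\cald$ is also false: by Proposition~\ref{p.embedding}(3), each coset of an element of $\ran(\Phi)$ contains exactly one generator, so $\ran(\Phi)\cap\cald=\emptyset$ --- though you catch this yourself mid-sentence, the fallback you gesture at is left entirely unspecified.)

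The generator-based transversal is what makes the paper's argument go through: since markings of generators are free, one takes any surjection $\sigma\colon X_\infty\to\nat$ with infinite fibers whose fiber-minima are increasing and gets $f(\ker\sigma_G)=G$ on the nose, proving surjectivity; and in the openness proof one extends the finite partial assignment $\alpha_{N_0}|_{\{x_0,\ldots,x_{K-1}\}}$ (which pins down membership of $u_0,\ldots,u_{k-1},v_0,\ldots,v_{l-1}$) to such a surjection $\varphi$, so that $M=\ker\wtilde\varphi$ lies in $\cald\cap\calu$ and satisfies $f(M)=H$ for every $H$ in an explicit clopen neighborhood of $f(N_0)$. To repair your proof, replace first-appearance representatives by least-indexed-generator representatives; your continuity argument then survives essentially as written, but surjectivity and openness require these explicit marking constructions in place of the appeals to $\Phi$ and to relabeling.
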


\begin{proof}
For any $N\in\cald$ we define $f(N)$ as follows. We choose the least-indexed generator from each coset of $N$ and enumerate these chosen generators in increasing order of their indices: $x_{i_0},x_{i_1},\ldots$. We call $\{x_{i_0},x_{i_1},\ldots\}$ the (enumerated) transversal associated to $N$. Let 
$$f(N)(a,b)=c\iff x_{i_a}N\cdot x_{i_b}N=x_{i_c}N$$
for every $a,b,c\in\nat$. Now $F_\infty/N\cong \wtilde{f(N)}$ is clear from the definition.

\emphd{Surjectivity.} Let $\sigma:X_\infty\to\nat$ be a surjective map so that $\sigma^{-1}(n)$ is infinite for every $n\in\nat$ and $\gamma:\ \nat\to \nat$, $\gamma(n)=\min\{k\in\nat:\ x_k\in\sigma^{-1}(n)\}$ is order-preserving. For any $G\in\calg$, the map $\sigma$ extends to a homomorphism $\sigma_G:\ F_\infty\to\wtilde G$. It is clear from the definition that $f(\ker\sigma_G)=G$.

\emphd{Continuity.} Fix any subbasic clopen set $\calb=\{G\in\calg:\ a\cdot b=c\}$ in $\calg$ and normal subgroup $N_0\in f^{-1}(\calb)$. Let $\{x_{i_0},x_{i_1},\ldots\}$ be the transversal associated to $N_0$. Let $n=\max\{i_a,i_b,i_c\}$. Then
$$\calw=\left\{N\in\cald:\ x_{i_a}x_{i_b}x_{i_c}^{-1}\in N\right\}\cap\bigcap_{i<j\leq n}\left\{N\in\cald:\ x_ix_j^{-1}\in N\iff x_ix_j^{-1}\in N_0\right\}$$
is a neighborhood of $N_0$ that lies in $f^{-1}(\calb)$ since for any $N\in \calw$ the enumerated transversal associated to $N$ coincides with that of $N_0$ up to $x_n$ and $x_{i_a}N\cdot x_{i_b}N=x_{i_c}N$ holds in $F_\infty/N$.

\emphd{Openness.} Fix any nonempty basic clopen set
$$\calu=\{N\in\cald:\ u_0,\ldots,u_{k-1}\in N\text{ and } v_0,\ldots,v_{l-1}\notin N\}.$$
Fix any $N_0\in\calu$ and let $\{x_{i_0},x_{i_1},\ldots\}$ be the transversal associated to $N_0$. Let $\alpha_{N_0}:X_\infty\to\nat$ be defined by $\alpha_{N_0}(x_i)=j\iff x_i\in x_{i_j}N_0$. Let $u_0',\ldots,u_{k-1}',v_0',\ldots,v_{l-1}'$ denote the words obtained by replacing every letter $x_i$ occurring in $u_0,\ldots,u_{k-1},v_0,\ldots,v_{l-1}$ with $\alpha_{N_0}(x_i)$, and let $i_e=\min\{i:\ x_i\in N_0\}$. It suffices to prove the following.

\emph{Claim.} The set
$$\calv=\{G\in\calg:\ i_e\cdot i_e=i_e\text{ and }\forall s<k\ (u_s'=i_e)\text{ and }\forall t<l\ (v_t'\neq i_e)\}$$
is a neighborhood of $f(N_0)$ that lies in $f(\calu)$.

\emph{Proof.} First, $\calv$ is open by Proposition~\ref{p.group_op_basis}, and $f(N_0)\in\calv$ follows from the definitions. To prove $\calv\subseteq f(\calu)$ fix any $H\in\calv$. We need to find $M\in\calu$ with $f(M)=H$.

Let $K\in\nat$ be such that $\{x_0,\ldots,x_{K-1}\}$ contains all letters occurring in $u_0,\ldots,u_{k-1},v_0,\ldots,v_{l-1}$. It is easy to see that there is a surjective map $\varphi:X_\infty\to\nat$ such that
\begin{itemize}
    \item[(1)] it extends $\alpha_{N_0}|_{\{x_0,\ldots,x_{K-1}\}}$,
    \item[(2)] the preimage $\varphi^{-1}(p)$ is infinite for every $p\in\nat$,
    \item[(3)] $p<q\implies\min\{i:\ x_i\in\varphi^{-1}(p)\}<\min\{i:\ x_i\in\varphi^{-1}(q)\}$ for every $p,q\in\nat$.
\end{itemize}
Let $\wtilde\varphi:F_\infty\to\wtilde H$ be the unique homomorphism extending $\varphi$. We claim that $M=\ker\wtilde\varphi$ is a good choice. First, $M\in\cald$ follows from (2) and the fact that $\wtilde\varphi$ is a surjective homomorphism, and $f(M)=H$ follows from (3). For $M\in\calu$, note that for any $w\in\{u_0,\ldots,u_{k-1},v_0,\ldots,v_{l-1}\}$ we have
$$w\in M\iff \wtilde\varphi(w)=i_e\text{ holds in }\wtilde H\iff w(\varphi(x_0),\ldots,\varphi(x_{K-1}))=i_e\text{ holds in }\wtilde H\iff$$
$$w(\alpha_{N_0}(x_0),\ldots,\alpha_{N_0}(x_{K-1}))=i_e\text{ holds in }\wtilde H\iff w'=i_e\text{ holds in }\wtilde H\iff w\in\{u_0,\ldots,u_{k-1}\},$$
where the last equivalence holds by $H\in\calv$. $\blacksquare$
\end{proof}

\begin{cor}\label{c.main}
A group property is generic in $\calg$ if and only if it is generic in $\calm$.
\end{cor}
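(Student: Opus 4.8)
The plan is to deduce Corollary~\ref{c.main} directly from Theorem~\ref{t.main} together with the machinery already assembled in the excerpt, using no new ideas beyond assembling the pieces. The corollary asserts an equivalence of genericity in the two spaces, so I would prove the two implications separately, in each case transporting comeagerness along the map $f:\cald\to\calg$ furnished by Theorem~\ref{t.main}, which is surjective, open, continuous, and isomorphism-preserving (in the sense $F_\infty/N\cong\wtilde{f(N)}$).

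First I would fix a group property $\calp\subseteq\calg$ and let $\calp^*\subseteq\calm$ be the corresponding property as in Notation~\ref{n.associated_group_prop}; since $f$ maps each $N$ to an isomorphic copy, the isomorphism-invariance of $\calp$ gives the clean preimage/image relations $f^{-1}(\calp)=\calp^*\cap\cald$ and $f(\calp^*\cap\cald)=\calp$. The key point is that $\cald$ is itself comeager (indeed dense $G_\delta$) in $\calm$ by Lemma~\ref{l.open_dense} applied with $\cals=\calm$, for which the condition $(\star)_\calm$ holds trivially (every countable group embeds into an infinite one, e.g.\ by taking a direct product with $\integer$). Hence being comeager in $\cald$ and being comeager in $\calm$ are the same thing, which lets me pass freely between $\calm$ and its comeager subspace $\cald$.

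For the forward direction, suppose $\calp$ is comeager in $\calg$. Apply Proposition~\ref{p.open_cont_surj}(1) to the surjective open continuous map $f:\cald\to\calg$ with $B=\calp$: this yields that $f^{-1}(\calp)=\calp^*\cap\cald$ is comeager in $\cald$, and since $\cald$ is comeager in $\calm$, $\calp^*$ is comeager in $\calm$. For the converse, suppose $\calp^*$ is comeager in $\calm$; then $\calp^*\cap\cald$ is comeager in $\cald$, and Proposition~\ref{p.open_cont_surj}(2) gives that its image $f(\calp^*\cap\cald)=\calp$ is comeager in $\calg$. Both directions use that $\cald$, being Polish as a $G_\delta$ subspace of the Polish space $\calm$ (Theorem~\ref{t.comp_metriz}), is a legitimate domain for Proposition~\ref{p.open_cont_surj}, whose hypotheses require Polish spaces at both ends.

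I do not expect a genuine obstacle here, as the statement is essentially a formal consequence of Theorem~\ref{t.main}; the only point requiring care is the bookkeeping that identifies preimages and images of $f$ with the starred properties intersected with $\cald$. This rests on the isomorphism-invariance of $\calp$ and $\calp^*$ together with the fact that $f$ sends each marked group to an isomorphic group operation and is onto, so I would state these identifications explicitly before invoking Proposition~\ref{p.open_cont_surj}. The mild subtlety of swapping between $\calm$ and $\cald$ is handled once and for all by the observation that $\cald$ is comeager in $\calm$.
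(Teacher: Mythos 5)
Your proof is correct and follows essentially the same route as the paper: the paper derives Corollary~\ref{c.main} as the special case $\cals=\calg$ of Corollary~\ref{c.main_general}, whose proof likewise restricts $f$ to the comeager Polish subspace given by Lemma~\ref{l.open_dense} and transfers genericity via Proposition~\ref{p.open_cont_surj}, with exactly your identification $f^{-1}(\calp)=\calp^*\cap\cald$. Your explicit verifications (that $(\star)_\calm$ holds, that $\cald$ is Polish, and that comeagerness passes between $\cald$ and $\calm$) are the same bookkeeping the paper leaves implicit.
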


We prove a more general version, which is also worth recording.

\begin{cor}\label{c.main_general}
Let $\cals\subseteq\calg$ be any $G_\delta$ group property such that $\cals^*$ is $G_\delta$ in $\calm$. Then a group property $\calp\subseteq\cals$ is generic in $\cals$ if and only if $\calp^*$ is generic in $\cals^*$. (Recall Notation~\ref{n.associated_group_prop}.)
\end{cor}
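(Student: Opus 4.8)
The plan is to mimic the strategy that passes from Theorem~\ref{t.main} to Corollary~\ref{c.main}, but relativized to the subspaces $\cals$ and $\cals^*$. The key observation is that Theorem~\ref{t.main} already supplies a surjective open continuous map $f:\cald\to\calg$ with $F_\infty/N\cong\wtilde{f(N)}$; the idea is to restrict $f$ to the set $\cald\cap f^{-1}(\cals)$ and show that it maps onto $\cals$ (or rather onto a comeager subset of $\cals$) while landing in $\cals^*$ on the domain side, thereby setting up an application of Proposition~\ref{p.open_cont_surj}. First I would record that both $\cals$ and $\cals^*$ are Polish: $\cals$ is $G_\delta$ in $\calg$ by hypothesis and $\calg$ is Polish, so $\cals$ is Polish by Theorem~\ref{t.comp_metriz}; likewise $\cals^*$ is $G_\delta$ in the Polish space $\calm$, hence Polish. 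This is what lets us invoke the Baire-category transfer machinery of Proposition~\ref{p.open_cont_surj}.

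Next I would analyze the domain. Because $f(N)\in\cals\iff\wtilde{f(N)}\cong F_\infty/N$ has the property $\cals$, and since $\cals$ is isomorphism-invariant while $\cals^*$ is by definition the set of $N\in\calm$ with $F_\infty/N$ isomorphic to some $\wtilde G$ with $G\in\cals$, the two pullbacks agree on $\cald$: for $N\in\cald$ we have $f(N)\in\cals\iff N\in\cals^*$. Thus $f^{-1}(\cals)\cap\cald=\cals^*\cap\cald$. Restricting $f$ to $\cald\cap\cals^*$ therefore gives a map
$$
g:\cald\cap\cals^*\to\cals,\qquad g=f\!\upharpoonright(\cald\cap\cals^*),
$$
which is continuous and open as a restriction of the open continuous $f$ to a set of the form $f^{-1}(\cals)\cap\cald$ (openness of the restriction to a full preimage is routine: $g(U)=f(U)\cap\cals$ for $U$ relatively open in $\cald\cap\cals^*$). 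Surjectivity of $g$ onto $\cals$ follows from the surjectivity part of Theorem~\ref{t.main} together with the fact that the kernel $\ker\sigma_G$ constructed there for a given $G\in\cals$ already lies in $\cald$ and has $f(\ker\sigma_G)=G$, whence $\ker\sigma_G\in\cals^*\cap\cald$.

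With $g:\cald\cap\cals^*\to\cals$ surjective open continuous between Polish spaces, Proposition~\ref{p.open_cont_surj} transfers genericity in both directions between $\cals$ and $\cald\cap\cals^*$. Finally I would relate genericity in $\cald\cap\cals^*$ to genericity in $\cals^*$ itself: by Lemma~\ref{l.open_dense}, $\cald$ is comeager in $\calm$, so $\cald\cap\cals^*$ is comeager in $\cals^*$ (intersecting a comeager set of $\calm$ with the $G_\delta$ subspace $\cals^*$), and hence a subset is comeager in $\cals^*$ iff it is comeager in the comeager subspace $\cald\cap\cals^*$. Tracking $\calp\subseteq\cals$ through $g^{-1}$ and $g$, and using $\calp^*\cap\cald\cap\cals^*=g^{-1}(\calp)$ (the same isomorphism-invariance identity as above, now applied to $\calp$ in place of $\cals$), yields the equivalence $\calp$ generic in $\cals$ $\iff$ $\calp^*$ generic in $\cals^*$. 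The main obstacle I anticipate is the bookkeeping around Lemma~\ref{l.open_dense}: that lemma is stated for group properties $\cals\subseteq\calm$ satisfying $(\star)_\cals$, so to conclude $\cald\cap\cals^*$ is comeager in $\cals^*$ I must either verify $(\star)_{\cals^*}$ or, more cleanly, use that $\cald$ is comeager in all of $\calm$ and that comeagerness relativizes to the $G_\delta$ subspace $\cals^*$; I would favor the latter, avoiding any need to check the embedding condition for $\cals^*$.
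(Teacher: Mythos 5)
Your setup reproduces the paper's proof almost exactly --- restricting the map $f$ of Theorem~\ref{t.main} to $\cald\cap\cals^*$, the identity $f^{-1}(\cals)\cap\cald=\cals^*\cap\cald$ (and likewise $g^{-1}(\calp)=\calp^*\cap\cald$) via isomorphism-invariance, surjectivity of the restriction via $\ker\sigma_G\in\cald\cap\cals^*$, and the appeal to Proposition~\ref{p.open_cont_surj} --- but your final step contains a genuine error, and you chose it knowingly. Comeagerness does \emph{not} relativize to $G_\delta$ subspaces: if $A$ is comeager in $X$ and $Y\subseteq X$ is $G_\delta$, nothing forces $A\cap Y$ to be comeager in $Y$; indeed $A\cap Y$ can be \emph{empty} whenever $Y$ is meager in $X$, since a comeager set may be disjoint from any given meager set. (Relativization is valid for open subspaces, not $G_\delta$ ones.) This failure is not hypothetical in the present context: the subspaces $\cals^*$ to which the corollary is actually applied (abelian, torsion-free abelian, torsion-free, \ldots) are meager in $\calm$ --- for instance, by Theorem~\ref{t.generic_marked_groups}~(G5) the generic $N\in\calm$ has $F_2$ embedded in $F_\infty/N$, so $\cala^*$ is meager --- hence the comeagerness of $\cald$ in all of $\calm$ gives no control over the relative category of $\cald\cap\cals^*$ inside $\cals^*$. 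With that step gone, your chain breaks precisely at the transfer between genericity in $\cald\cap\cals^*$ and genericity in $\cals^*$.

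The repair is the fork you rejected, and it costs one line. The condition $(\star)_{\cals^*}$ holds trivially: every $M\in\cals^*$ satisfies $F_\infty/M\cong\wtilde G$ for some $G\in\cals$, and $\wtilde G$ has underlying set $\nat$, so $F_\infty/M$ is already infinite and one may take $N=M$ in the definition of $(\star)$. Lemma~\ref{l.open_dense}, applied with the group property $\cals^*\subseteq\calm$ in place of $\cals$, then yields that each $\cald_m\cap\cals^*$ is dense open in $\cals^*$, hence $\cald\cap\cals^*$ is comeager (and, being $G_\delta$ in $\calm$, Polish) in $\cals^*$ --- which is exactly how the paper's proof opens (``First, note that $(\star)_{\cals^*}$ holds''). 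With this substitution the rest of your argument is correct and coincides with the paper's.
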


\begin{proof}
First, note that $(\star)_{\cals^*}$ holds. By Lemma~\ref{l.open_dense}, $\cald\cap\cals^*$ is a Polish subspace in $\cals^*$. Observe that for the map $f$ given by Theorem~\ref{t.main}, $f|_{\cald\cap\cals^*}:\cald\cap\cals^*\to\cals$ is also surjective, continuous, and open. Thus, by Proposition~\ref{p.open_cont_surj}, a group property $\calp\subseteq\cals$ is generic in $\cals$ if and only if $f_{\cald\cap\cals^*}^{-1}(\calp)=\cald\cap\calp^*$ is generic in $\cald\cap\cals^*$. Since $\cald\cap\cals^*$ is comeager in $\cals^*$ by Lemma~\ref{l.open_dense}, this proves the corollary.
\end{proof}

\section{Applications}\label{s.applications}

\subsection{Countable groups}\label{ss.countable_groups}

The following results were proved in \cite{goldbring2023generic}, \cite{elekes2021generic}, and \cite{elekes2024generic}.
\begin{enumerate}
    \item[(G1)]\cite[Thm~1.1.6]{goldbring2023generic} Every isomorphism class is meager in $\calg$.
    \item[(G2)] \cite[Lemma~5.2.7]{goldbring2023generic} Algebraic closedness is a generic property in $\calg$.
    \item[(G3)] \cite[Prop~5.1.1]{goldbring2023generic} (0-1 law) Any Baire-measurable group property $\calp\subseteq\calg$ is either meager or comeager.
    \item[(G4)] \cite[Cor~5.9]{elekes2021generic} There exists a comeager elementary equivalence class in $\calg$.
    \item[(G5)] \cite[Thm~3.17]{elekes2024generic} For any group $H$, the set $\{G\in\calg:\ H\text{ embeds into G}\}$ is comeager in $\calg$ if and only if $H$ is countable and each finitely generated subgroup of $H$ has solvable word problem.
\end{enumerate}
Corollary~\ref{c.main} gives us the same results for the space of marked groups.

\begin{theorem}\label{t.generic_marked_groups}
Statements (G1)-(G5) hold when $\calg$ is replaced by the space of marked groups $\calm$, and the set $\{G\in\calg:\ H\text{ embeds into G}\}$ is replaced by $\{N\in\calm:\ H\text{ embeds into }F_\infty/N\}$.
\end{theorem}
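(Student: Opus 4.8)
The plan is to derive all five statements from Corollary~\ref{c.main}, isolating (G3) as the only one that needs an extra ingredient. For (G2), (G4) and (G5) the transfer is immediate: each names a single isomorphism-invariant property --- being an algebraically closed group, being elementarily equivalent to a fixed complete theory, and containing an embedded copy of a fixed group $H$ --- and hence a group property $\calp\subseteq\calg$ whose associated property $\calp^*\subseteq\calm$ (Notation~\ref{n.associated_group_prop}) is precisely the same property phrased for the quotients $F_\infty/N$. So Corollary~\ref{c.main} yields the comeagerness of $\calp^*$ from that of $\calp$ for (G2) and (G4), and for (G5) it preserves the full equivalence with the condition that $H$ be countable with all finitely generated subgroups having solvable word problem.

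For (G1) I would split on cardinality. An isomorphism class in $\calm$ is $\mathcal{Q}=\{N:\ F_\infty/N\cong G_0\}$ for a fixed countable group $G_0$. If $G_0$ is finite, then $\mathcal{Q}\subseteq\calm\setminus\cald$, which is meager by Lemma~\ref{l.open_dense}. If $G_0$ is infinite, let $C\subseteq\calg$ be its isomorphism class, meager by (G1) for $\calg$; then $\calg\setminus C$ is a comeager group property, so by Corollary~\ref{c.main} its associated property is comeager in $\calm$. That associated property is the set of $N$ with $F_\infty/N$ infinite and not isomorphic to $G_0$, which is disjoint from $\mathcal{Q}$; hence $\mathcal{Q}$ is meager.

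The hard part is (G3), since it quantifies over all Baire-measurable group properties and so forces me to move the Baire property itself across the map $f:\cald\to\calg$ of Theorem~\ref{t.main}. Given such a $\mathcal{Q}\subseteq\calm$, I would set $\calp=f(\mathcal{Q}\cap\cald)\subseteq\calg$. Because the fibres of $f$ consist of normal subgroups with mutually isomorphic quotients and $\mathcal{Q}$ is isomorphism invariant, $\mathcal{Q}\cap\cald$ is a union of fibres, so $f^{-1}(\calp)=\mathcal{Q}\cap\cald$ and $\calp$ is a group property in $\calg$. To apply (G3) for $\calg$ I need $\calp$ to have the Baire property, and this is the one step that is not purely soft: the continuous open image of a Baire-measurable set need not be Baire-measurable. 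I would secure it via a Baire-measurable section $s:\calg\to\cald$ of $f$ (Jankov--von Neumann uniformization, see \cite{kechris2012classical}): then $\calp=s^{-1}(f^{-1}(\calp))=s^{-1}(\mathcal{Q}\cap\cald)$ is the preimage of a Baire-measurable set under a Baire-measurable map, hence Baire-measurable. Now (G3) for $\calg$ makes $\calp$ meager or comeager, and Proposition~\ref{p.open_cont_surj} applied to $f$ (together with passing to complements) transfers this to $f^{-1}(\calp)=\mathcal{Q}\cap\cald$; since $\cald$ is comeager in $\calm$, it follows that $\mathcal{Q}$ is meager or comeager.
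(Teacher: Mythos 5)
Your handling of (G1), (G2), (G4) and (G5) is correct, and it fleshes out what the paper dispatches in one line (``Corollary~\ref{c.main} gives us the same results''); in particular, your case split in (G1) addresses a point the paper leaves implicit, namely that $\calm$, unlike $\calg$, contains finite groups, whose isomorphism classes must be disposed of separately via $\calm\setminus\cald$ being meager (Lemma~\ref{l.open_dense}). You are also right that (G3) is the one item where Corollary~\ref{c.main} as a black box does not suffice, and your skeleton for it is the right one: set $\calp=f(\mathcal{Q}\cap\cald)$, note that $\mathcal{Q}\cap\cald$ is a union of fibres of $f$ so that $f^{-1}(\calp)=\mathcal{Q}\cap\cald$ and $\calp$ is a group property, apply (G3) in $\calg$, and transfer back through Proposition~\ref{p.open_cont_surj} plus complements.

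However, the step securing Baire measurability of $\calp$ is a genuine gap: the inference ``$\calp=s^{-1}(\mathcal{Q}\cap\cald)$ is the preimage of a Baire-measurable set under a Baire-measurable map, hence Baire-measurable'' is false. Baire measurability of $s$ only guarantees that preimages of \emph{open} (hence Borel) sets have the Baire property; writing $\mathcal{Q}\cap\cald=U\,\triangle\,M$ with $U$ open and $M$ meager, the set $s^{-1}(M)$ need not be meager or even Baire-measurable --- already a \emph{continuous} injection of $2^\nat$ onto a nowhere dense Cantor set $K\subseteq[0,1]$ pulls arbitrary subsets of $K$ (all meager, hence Baire-measurable) back to arbitrary subsets of $2^\nat$. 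So the Jankov--von Neumann section buys you nothing here. The gap is fixable in two ways. First, the Baire property \emph{does} transfer along the open continuous surjection $f$ for unions of fibres: with $\mathcal{Q}\cap\cald=U\,\triangle\,M$ as above, the generalized Kuratowski--Ulam theorem (\cite[Thm A.1]{MELLERAY_TSANKOV_2013}, quoted in the paper's appendix) gives that $M\cap f^{-1}(G)$ is meager in the fibre for comeagerly many $G$; since each fibre (a closed, hence Polish and Baire, subspace of $\cald$) is either contained in or disjoint from $\mathcal{Q}\cap\cald$, one checks that for such $G$ one has $G\in\calp\iff G\in f(U)$, so $\calp\,\triangle\,f(U)$ is meager and $\calp$ has the Baire property. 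Alternatively, avoid transferring the Baire property altogether: if $\mathcal{Q}$ is non-meager, it is comeager in some nonempty open $\calu\subseteq\calm$; pushing forward along the restriction of $f$ shows $\calp$ is comeager in the nonempty open set $f(\calu\cap\cald)$, so $\calp$ contains a Borel set $\calb_0$ that is dense $G_\delta$ in $f(\calu\cap\cald)$, and the $S_\infty$-saturation of $\calb_0$ is an isomorphism-invariant \emph{analytic} subset of $\calp$, which has the Baire property (\cite[Thm~21.6]{kechris2012classical}, exactly as in the proof of Proposition~\ref{p.open_cont_surj}); applying (G3) in $\calg$ to this set makes it, hence $\calp$, comeager, and pulling back along $f$ yields that $\mathcal{Q}$ is comeager.
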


\begin{remark}\label{r.osin}
D.~Osin \cite{osin2021topological} studied the space of \emph{finitely generated} marked groups and proved a characterization of closed subspaces of the space of finitely generated marked groups in which a 0-1 law holds for properties defined by $\call_{\omega_1,\omega}$-sentences. Using this characterization, Osin decided for various spaces of hyperbolic lacunary groups whether they contain a comeager elementary equivalence class.
\end{remark}

\subsection{Generic properties in various subspaces}

Consider the following group properties:
\begin{enumerate}
    \item abelian,
    \item torsion-free abelian,
    \item torsion-free,
    \item amenable,
    \item has no $F_2$ subgroups,
    \item has the unique product property (UPP),
    \item left orderable,
    \item locally indicable,
    \item biorderable,
    \item obeys the law $w(x_0,\ldots,x_{n-1})=e_G$ (for a fixed word $w$).
\end{enumerate}
\begin{remark}\label{r.G_delta_properties}
It is clear that abelian groups form a closed subspace in $\calg$. It was proved in \cite{goldbring2023generic} that properties (3)-(10) are $G_\delta$ in $\calg$. To prove that $\cals^*$ is $G_\delta$ in $\calm$ where $\cals\subseteq\calg$ is defined by one of properties (1)-(10) is, in each case, either easy or doable by a straightforward adaptation of the arguments in \cite{goldbring2023generic}.
\end{remark}

Recall from \cite[page 6250]{goldbring2023generic} that
$$\text{biorderable $\implies$ locally indicable $\implies$ left orderable $\implies$ has the UPP $\implies$ torsion-free.}$$
The following genericity results concerning various subspaces of $\calg$ were proved in \cite{goldbring2023generic, elekes2024generic, darji2023generic}.
\begin{enumerate}
    \item[(S1)] \cite[Cor~3.25]{elekes2024generic} In the subspace $\cala$ of abelian groups, the isomorphism class of the group $\ds\bigoplus_{i\in\nat}(\rat/\integer)$ is comeager.
    \item[(S2)] \cite[Thm~4.3]{darji2023generic} In the subspace of torsion-free abelian groups, the isomorphism class of $(\rat,+)$ is comeager.
    \item[(S3)] \cite[Cor~1.1.7]{goldbring2023generic} In the subspace of left orderable groups, there is no comeager isomorphism class.
    \item[(S4)] \cite[Cor~1.3.1]{goldbring2023generic} The generic group without $F_2$ subgroup is nonamenable.
    \item[(S5)] \cite[Cor~1.3.1]{goldbring2023generic} The generic amenable group is not elementary amenable.
    \item[(S6)] \cite[Cor~1.3.6]{goldbring2023generic} The generic torsion-free group does not have the UPP.
    \item[(S7)] \cite[Cor~1.3.6]{goldbring2023generic} The generic group with the UPP is not left orderable.
    \item[(S8)] \cite[Cor~1.3.3]{goldbring2023generic} The generic left orderable group is not locally indicable.
    \item[(S9)] \cite[Cor~1.3.3]{goldbring2023generic} The generic locally indicable group is not biorderable.
    \item[(S10)] \cite[Cor~5.4.4]{goldbring2023generic} For any closed nonamenable word $w$, the generic group satisfying the law $w=e$ is nonamenable.
\end{enumerate}

\begin{theorem}\label{t.subspaces_generic_prop}
Statements (S1)-(S10) also hold for the corresponding subspaces of $\calm$.
\end{theorem}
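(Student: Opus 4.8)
The plan is to obtain Theorem~\ref{t.subspaces_generic_prop} as a direct application of the more general Corollary~\ref{c.main_general}, rather than proving each of (S1)--(S10) afresh. The key observation is that each statement (S1)--(S10) asserts something about genericity \emph{within a subspace} $\cals\subseteq\calg$ cut out by one of the properties (1)--(10), and Corollary~\ref{c.main_general} is precisely the tool that transfers such relative genericity statements between $\cals$ and $\cals^*\subseteq\calm$. So the first step is to package the input data correctly: for each of (S1)--(S10), identify the ambient subspace $\cals$ (abelian, torsion-free abelian, torsion-free, amenable, left orderable, with the UPP, satisfying a law $w=e$, etc.) and the group property $\calp\subseteq\cals$ whose genericity or non-genericity is at issue (e.g.\ the isomorphism class of $\bigoplus_{i\in\nat}(\rat/\integer)$ for (S1), or nonamenability for (S4)).

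The second step is to verify the two hypotheses of Corollary~\ref{c.main_general} in each case, namely that $\cals$ is $G_\delta$ in $\calg$ and that $\cals^*$ is $G_\delta$ in $\calm$. This is already done for us: Remark~\ref{r.G_delta_properties} records that properties (1)--(10) yield $G_\delta$ subspaces $\cals\subseteq\calg$ and that the corresponding $\cals^*$ are $G_\delta$ in $\calm$. With these hypotheses in hand, Corollary~\ref{c.main_general} gives, for every $G_\delta$ group property $\calp\subseteq\cals$, the equivalence ``$\calp$ is generic in $\cals$ if and only if $\calp^*$ is generic in $\cals^*$.'' Since each property $\calp$ appearing in (S1)--(S10) is isomorphism-invariant and $G_\delta$ (isomorphism classes are $G_\delta$, and the negations of properties such as amenability or local indicability are again of the required descriptive complexity in these settings), the equivalence applies term by term.

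The third step is to translate the conclusions. For the positive statements (S1), (S2), which assert that a specific isomorphism class $\calp$ is comeager in $\cals$, Corollary~\ref{c.main_general} immediately yields that $\calp^*$ is comeager in $\cals^*$; since $\cals^*$ is the corresponding subspace of $\calm$ (of abelian, resp.\ torsion-free abelian, marked groups) and $\calp^*$ is exactly the isomorphism class of the same group inside $\calm$, we recover the marked-group version. For the statements of the form ``the generic group in $\cals$ has property $\calq$'' (e.g.\ (S4)--(S10)), the content is that $\calq\cap\cals$ is comeager in $\cals$; applying the equivalence to $\calp=\calq\cap\cals$ transfers comeagerness to $\cals^*$. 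The non-existence statement (S3) follows by combining the equivalence with the fact (already available via Corollary~\ref{c.main} and (G1)) that individual isomorphism classes are meager, so no comeager isomorphism class can appear in $\cals^*$ either.

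The main obstacle is not conceptual but bookkeeping: one must be confident that in every case the relevant property $\calp$ is genuinely $G_\delta$ (and isomorphism-invariant) so that Corollary~\ref{c.main_general} legitimately applies, and that $\calp^*$ in $\calm$ describes the intended marked-group property. For the isomorphism-class statements this rests on isomorphism classes being $G_\delta$, and for the ``generic group is not \ldots'' statements it rests on the complements of the auxiliary properties being of the right complexity. Since Remark~\ref{r.G_delta_properties} certifies the descriptive complexity of the ambient subspaces and the analogous verification for the individual $\calp$ is, in each case, either routine or a direct adaptation of the arguments in \cite{goldbring2023generic}, no new difficulty arises, and the theorem follows by applying Corollary~\ref{c.main_general} to each pair $(\cals,\calp)$ in turn.
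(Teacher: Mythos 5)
There is a genuine gap: your argument transfers (S1)--(S10) from $\cals\subseteq\calg$ to $\cals^*\subseteq\calm$, but the theorem is about the corresponding subspace $\cals'$ of $\calm$ defined by the same property --- and these differ. As the paper notes immediately after the theorem statement, $\cala'\supsetneqq\cala^*$: since every group in $\calg$ is countably infinite, $\cals^*$ consists only of those $N\in\calm$ for which $F_\infty/N$ is \emph{infinite} and has the property, whereas $\cals'$ also contains the finite ones (finite abelian groups, finite amenable groups, finite groups obeying a law, the trivial group, etc.). Your claim that ``$\cals^*$ is the corresponding subspace of $\calm$'' is therefore false, and comeagerness in $\cals^*$ does not by itself yield comeagerness in $\cals'$. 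The missing ingredient is exactly the paper's first step: verify $(\star)_{\cals'}$ for each of the ten properties (assuming $\cals\neq\emptyset$ in the case of property (10)), so that Lemma~\ref{l.open_dense} gives that $\cald\cap\cals'$ is comeager in $\cals'$; since $\cald\cap\cals'\subseteq\cals^*\subseteq\cals'$ (elements of $\cald$ have infinite quotient), it follows that $\cals^*$ is comeager in $\cals'$, and only then do the transferred statements for $\cals^*$, obtained via Corollary~\ref{c.main_general} and Remark~\ref{r.G_delta_properties}, yield (S1)--(S10) for $\cals'$.

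A secondary point: you impose the hypothesis that each property $\calp$ occurring in (S1)--(S10) be $G_\delta$, justified by the claim that isomorphism classes are $G_\delta$. Corollary~\ref{c.main_general} makes no complexity assumption on $\calp$ at all --- only $\cals$ and $\cals^*$ must be $G_\delta$, since the proof goes through Proposition~\ref{p.open_cont_surj}, which handles arbitrary comeager sets. This is fortunate, because isomorphism classes in these spaces are in general not $G_\delta$ (they are merely Borel or analytic), so this part of your bookkeeping is both unnecessary and unjustified as stated. With the $\cals^*$-versus-$\cals'$ step supplied, the remainder of your reduction to Corollary~\ref{c.main_general} coincides with the paper's argument.
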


In the abelian case, for example, this means that (S1) holds when $\cala$ is replaced by $\cala'$ (see Subsection~\ref{ss.marked_abelian}). Note that $\cala'\supsetneqq \cala^*$ since the former contains finite groups.

\begin{proof}
Let $\cals$ be any subspace of $\calg$ defined by one of the properties (1)-(10). Let $\cals'$ be the subspace of $\calm$ defined by the same property. It is straightforward to check that $(\star)_{\cals'}$ holds. (In the case of property (10), we need to assume $\cals\neq\emptyset$.) By Lemma~\ref{l.open_dense}, $\cald\cap\cals'$ is comeager in $\cals'$. Since $\cald\cap\cals'\subseteq\cals^*\subseteq\cals'$ follows from the definitions, $\cals^*$ is also comeager in $\cals'$. Hence it suffices to prove (S1)-(S10) for $\cals^*$. Thus, Corollary~\ref{c.main_general}, Remark~\ref{r.G_delta_properties}, and the above theorems stating (S1)-(S10) for subspaces of $\calg$ conclude the proof.
\end{proof}

\subsection{Compact metrizable abelian groups}

In this subsection, we give new proofs of \cite[Thm~4.7]{elekes2024generic} and \cite[Thm~3.7]{darji2023generic}. Let
$$\cala=\left\{G\in\calg:\ \wtilde G\text{ is abelian}\right\}.$$
It is easy to check that $\cala$ is closed in $\calg$. Recall from Subsection~\ref{ss.marked_abelian} that
$$\cala''=\{N\subseteq F:\ N\leq F\},$$
where $F=\bigoplus_{i\in\nat}\integer$, and
$$\cala'=\{N\in\calm:\ F_\infty/N\text{ is abelian}\}$$
are closed subspaces of $2^F$ and $\calm$ respectively. Recall from Subsection~\ref{ss.compact_metriz_ab} the space $\cals(\TT)$ of compact metrizable abelian groups. Figure~\ref{f.abelian} illustrates the relationship between these spaces.

\begin{figure}[ht]
\centering
\includegraphics[scale=0.69]{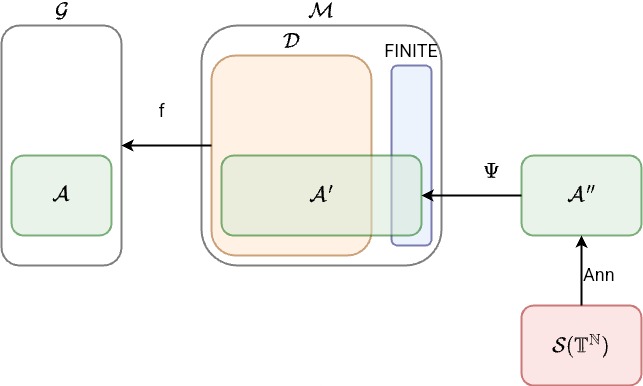}
\caption{}
\label{f.abelian}
\end{figure}

\vspace{1cm}

The following two theorems were proved separately in \cite{elekes2024generic}. We will use Corollary~\ref{c.main_general} to show that Theorem~\ref{t.generic_cpt_metriz_ab} follows directly from Theorem~\ref{t.generic_ctbl_ab}

\begin{theorem}\label{t.generic_ctbl_ab}
\cite[Cor~3.25]{elekes2024generic}
The isomorphism class of the group
$$A=\bigoplus_{i\in\nat}(\rat/\integer)$$
is comeager in $\cala$.
\end{theorem}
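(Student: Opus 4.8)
The plan is to exhibit a dense $G_\delta$ subset of $\cala$ lying inside the isomorphism class of $A$; since $\cala$ is a closed, hence Polish, subspace of $\calg$, this suffices by Proposition~\ref{p.comeager_dense_G_delta}. The starting point is a purely algebraic characterization: among countably infinite abelian groups, $\wtilde G\cong A$ if and only if $\wtilde G$ is (i) torsion, (ii) divisible, and (iii) for every prime $p$ the $p$-torsion subgroup $\wtilde G[p]=\{a\in\nat:\ a^p=e_G\}$ is infinite. Indeed, a countable torsion divisible abelian group splits as $\bigoplus_p D_p$ with each primary component $D_p$ divisible, hence $D_p\cong\bigoplus_{\kappa_p}\integer[p^\infty]$ for some $\kappa_p\le\omega$; condition (iii) says $D_p[p]$ is infinite, forcing $\kappa_p=\omega$ for every $p$, which is exactly the decomposition $A=\bigoplus_p\bigoplus_{i\in\nat}\integer[p^\infty]$. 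I would quote the classification of divisible abelian groups for this equivalence.

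Next I would check that each of (i), (ii), (iii) cuts out a $G_\delta$ subset of $\cala$, written as a countable intersection of open sets from the clopen basis of Proposition~\ref{p.group_op_basis}. Torsion is $\bigcap_{a}\bigcup_{n\ge 1}\{G:\ a^n=e_G\}$; divisibility is $\bigcap_{a}\bigcap_{n\ge 1}\bigcup_{b}\{G:\ b^n=a\}$; and condition (iii) is $\bigcap_p\bigcap_m\{G:\ \text{there exist }\ge m\text{ distinct }a\text{ with }a^p=e_G\text{ and }a\ne e_G\}$, the innermost set being a countable union of clopen sets over the choices of witnesses. Each inner set is open, so all three properties are $G_\delta$, and by the characterization their intersection is precisely the isomorphism class of $A$ in $\cala$, which is therefore $G_\delta$.

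The main work, and the step I expect to be the genuine obstacle, is density. It is cleanest to prove directly that the isomorphism class of $A$ is dense. Fix a nonempty basic clopen set $\calu$ as in Proposition~\ref{p.group_op_basis}: it records finitely many equations $u_i(\vec a)=b_i$ and inequations $v_j(\vec a)\ne c_j$ among finitely many elements of $\nat$. Augmenting these with the distinctness inequations $s\ne s'$ for the finitely many constrained $s,s'$, the resulting finite system is satisfiable in some abelian group because $\calu\ne\emptyset$. The crucial point is that a finite system of equations and inequations over abelian groups, if satisfiable at all, is satisfiable in a finite abelian group: the subgroup generated by the witnesses has the form $\integer^r\oplus T$ with $T$ finite, and passing to the quotient $(\integer/M)^r\oplus T$ for $M$ large preserves every equation and, since only finitely many nonzero elements must stay nonzero, every inequation. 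As every finite abelian group embeds into $A$, the system is realized by distinct elements $\beta_s\in A$ indexed by the constrained $s\in\nat$. Choosing a bijection $\pi:\nat\to A$ with $\pi(s)=\beta_s$ for those finitely many $s$ and transporting the operation of $A$ through $\pi$ produces $G\in\calu$ with $\wtilde G\cong A$, proving density.

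Finally, since $\cala$ is Polish, a dense $G_\delta$ set is comeager by Proposition~\ref{p.comeager_dense_G_delta}, so the isomorphism class of $A$, being both dense and $G_\delta$, is comeager in $\cala$. The only delicate ingredient is the finite-realizability lemma underlying density; once the three-property characterization and that lemma are in place, the rest is routine bookkeeping with the basis of Proposition~\ref{p.group_op_basis}.
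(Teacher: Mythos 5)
Your proposal is correct, but note that the paper itself contains no proof of this statement: Theorem~\ref{t.generic_ctbl_ab} is quoted verbatim as Corollary~3.25 of \cite{elekes2024generic} and is used as a black-box input to the transfer machinery (Corollary~\ref{c.main_general} together with Proposition~\ref{p.abelian_equiv} and Theorem~\ref{t.pontryagin_homeo}) to deduce the compact dual statement, Theorem~\ref{t.generic_cpt_metriz_ab}. So there is no in-paper argument to compare against; what you have written is a self-contained substitute for the citation, and it checks out. The characterization of $A$ among countably infinite abelian groups as torsion, divisible, with infinite $p$-socle for every prime $p$ is the standard classification of divisible groups (divisibility passes to primary components, and countability forces $\kappa_p=\omega$ exactly when the socle $D_p[p]\cong(\integer/p)^{(\kappa_p)}$ is infinite). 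Each of the three conditions is $G_\delta$ as you claim, with the small notational caveat that $e_G$ is not a fixed number, so sets such as $\{G\in\calg:\ a^n=e_G\}$ should be read as the open union over candidate identities $e$ of the clopen sets $\{G:\ G(e,e)=e\ \text{and}\ a^n=e\}$, exactly as in the paper's remark following the definition of $\calg$. Your finite-realizability lemma, the genuine crux of density, is sound: it is precisely residual finiteness of finitely generated abelian groups, since $\integer^r\oplus T$ surjects onto $(\integer/M)^r\oplus T$ preserving any finitely many prescribed nonvanishings once $M$ exceeds the relevant coordinates, and every finite abelian group embeds into $A$ because each $\integer/p^k$ embeds into $\integer[p^\infty]\subseteq\rat/\integer$ with infinitely many coordinates available. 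Augmenting the system with distinctness inequations is consistent because the constrained parameters are distinct natural numbers, hence distinct elements of any witness $\wtilde G_0$ with $G_0\in\calu\cap\cala$, and this is exactly what lets you extend $s\mapsto\beta_s$ to a bijection $\nat\to A$ and push the operation forward. One minor remark: for comeagerness you would only need the class to contain a dense $G_\delta$ set, but you prove the stronger fact that the class itself is $G_\delta$ and dense, which is fine and slightly more informative.
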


\begin{theorem}\label{t.generic_cpt_metriz_ab}
\cite[Thm~4.7]{elekes2024generic}
The isomorphism class of the group
$$Z=\prod_{p\text{ prime}}(Z_p)^\nat,$$
where $Z_p$ denotes the group of $p$-adic integers, is comeager in $\cals(\TT)$.
\end{theorem}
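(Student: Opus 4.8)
The plan is to transport the comeagerness in Theorem~\ref{t.generic_ctbl_ab} along the chain of spaces $\cala$, $\cala^*$, $\cala'$, $\cala''$, and $\cals(\TT)$, and then to use Pontryagin duality to recognize the terminal isomorphism class as that of $Z$. First I would apply Corollary~\ref{c.main_general} with $\cals=\cala$ --- which is closed, hence $G_\delta$, and whose starred version $\cala^*$ is $G_\delta$ in $\calm$ by Remark~\ref{r.G_delta_properties} --- to the group property $\calp=\{G\in\cala:\ \wtilde G\cong A\}$. By Theorem~\ref{t.generic_ctbl_ab} this $\calp$ is comeager in $\cala$, so the corollary yields that $\calp^*=\{N\in\calm:\ F_\infty/N\cong A\}$ is comeager in $\cala^*$. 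Exactly as in the proof of Theorem~\ref{t.subspaces_generic_prop}, Lemma~\ref{l.open_dense} gives $\cald\cap\cala'\subseteq\cala^*\subseteq\cala'$ with $\cald\cap\cala'$ comeager in $\cala'$, so $\cala^*$ is comeager in $\cala'$; by transitivity of comeagerness through comeager subspaces, $\calp^*$ is comeager in $\cala'$. Since $A$ is abelian, $\calp^*$ is precisely the isomorphism class of $A$ inside $\cala'$.

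Next I would push this across the two homeomorphisms already available. The map $\Psi:\cala''\to\cala'$ of Proposition~\ref{p.abelian_equiv} is a homeomorphism satisfying $F/N\cong F_\infty/\Psi(N)$, so $\Psi^{-1}(\calp^*)=\{N\in\cala'':\ F/N\cong A\}$ is comeager in $\cala''$. Then the annihilator map $\ann:\cals(\TT)\to\cala''$ of Theorem~\ref{t.pontryagin_homeo} is a homeomorphism satisfying $F/\ann(K)\cong\what K$, so
$$\ann^{-1}\big(\{N\in\cala'':\ F/N\cong A\}\big)=\{K\in\cals(\TT):\ \what K\cong A\}$$
is comeager in $\cals(\TT)$. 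It remains only to identify this last set with the isomorphism class of $Z$.

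The identification is the one genuinely duality-theoretic point. By Proposition~\ref{p.pontryagin}(3) the dual of $A=\bigoplus_{i\in\nat}(\rat/\integer)$ is $\what A\cong\prod_{p\text{ prime}}(Z_p)^\nat=Z$. Since Pontryagin duality satisfies $\widehatto{K}{\widehat K}\cong K$ and preserves isomorphism, for $K\in\cals(\TT)$ we have $\what K\cong A$ if and only if $K\cong\widehatto{K}{\widehat K}\cong\what A\cong Z$. Hence $\{K\in\cals(\TT):\ \what K\cong A\}$ is exactly the isomorphism class of $Z$, and the previous paragraph shows it is comeager in $\cals(\TT)$, as required. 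I expect the only real obstacle to be bookkeeping: confirming that each set really is the stated isomorphism class --- so that isomorphism-invariance together with the identities $F/N\cong F_\infty/\psi^{-1}(N)$ and $F/\ann(K)\cong\what K$ line up correctly --- and that comeagerness genuinely transfers through the comeager subspace $\cala^*\subseteq\cala'$; every map after that step is a homeomorphism, and homeomorphisms preserve comeagerness automatically.
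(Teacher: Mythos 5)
Your proposal is correct and takes essentially the same route as the paper's own proof (Theorem~\ref{t.cpt_metriz_ab_transfer}): transfer Theorem~\ref{t.generic_ctbl_ab} to $\cala^*$ via Corollary~\ref{c.main_general}, upgrade to $\cala'$ using $(\star)_{\cala'}$ and Lemma~\ref{l.open_dense}, and then pass through the homeomorphisms $\Psi$ and $\ann$ to conclude with Proposition~\ref{p.pontryagin}(3). Your explicit identification of $\{K\in\cals(\TT):\ \what K\cong A\}$ with the isomorphism class of $Z$ via $\widehatto{K}{\widehat K}\cong K$ merely spells out what the paper compresses into the remark that $\ann$ preserves isomorphism (Theorem~\ref{t.pontryagin_homeo}).
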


\begin{theorem}\label{t.cpt_metriz_ab_transfer}
Theorem~\ref{t.generic_cpt_metriz_ab} follows
from Theorem~\ref{t.generic_ctbl_ab}.
\end{theorem}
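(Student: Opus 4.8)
The plan is to transport the comeager isomorphism class of $A=\bigoplus_{i\in\nat}(\rat/\integer)$ from $\cala$ all the way to $\cals(\TT)$ along the chain of spaces
$$\cala\ \longrightarrow\ \cala'\ \xrightarrow{\ \Psi^{-1}\ }\ \cala''\ \xrightarrow{\ \ann^{-1}\ }\ \cals(\TT),$$
and then to identify the resulting comeager class with the isomorphism class of $Z$ via Pontryagin duality. The first arrow combines Corollary~\ref{c.main_general} with the bridge between $\cala^*$ and $\cala'$, while the remaining two arrows are isomorphism-preserving homeomorphisms. Every arrow therefore transfers or preserves genericity, so the only genuinely computational point is the dual identification $\what A\cong Z$.

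First I would move from $\calg$ to $\calm$. Write $\calp=\{G\in\cala:\ \wtilde G\cong A\}$ for the isomorphism class of $A$ in $\cala$, which is comeager by Theorem~\ref{t.generic_ctbl_ab}. Since $\cala$ is closed, hence $G_\delta$, in $\calg$, and $\cala^*$ is $G_\delta$ in $\calm$ (Remark~\ref{r.G_delta_properties}, property~(1)), Corollary~\ref{c.main_general} applies and yields that $\calp^*=\{N\in\calm:\ F_\infty/N\cong A\}$ is comeager in $\cala^*$. As in the proof of Theorem~\ref{t.subspaces_generic_prop}, the condition $(\star)_{\cala'}$ holds, so Lemma~\ref{l.open_dense} gives $\cald\cap\cala'\subseteq\cala^*\subseteq\cala'$ with $\cald\cap\cala'$ comeager in $\cala'$; hence $\cala^*$ is comeager (and dense) in the Baire space $\cala'$, and therefore $\calp^*$ is comeager in $\cala'$ as well. (Equivalently, this last conclusion is exactly statement~(S1) transferred to $\cala'$ in Theorem~\ref{t.subspaces_generic_prop}.)

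Next I would push $\calp^*$ across the two homeomorphisms. Proposition~\ref{p.abelian_equiv} gives the homeomorphism $\Psi:\cala''\to\cala'$ with $F/N\cong F_\infty/\Psi(N)$, so $\Psi^{-1}(\calp^*)=\{M\in\cala'':\ F/M\cong A\}$ is comeager in $\cala''$. Then Theorem~\ref{t.pontryagin_homeo} gives the homeomorphism $\ann:\cals(\TT)\to\cala''$ with $F/\ann(K)\cong\what K$, so pulling back yields that $\{K\in\cals(\TT):\ \what K\cong A\}$ is comeager in $\cals(\TT)$. Both maps preserve isomorphism, so these sets are genuine isomorphism-invariant classes.

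Finally I would identify this class. By Pontryagin duality the condition $\what K\cong A$ is equivalent to $K\cong\what A$, and Proposition~\ref{p.pontryagin}(3) computes
$$\what A=\what{\bigoplus_{i\in\nat}(\rat/\integer)}=\prod_{p\text{ prime}}(Z_p)^\nat=Z.$$
Hence $\{K\in\cals(\TT):\ K\cong Z\}$ is comeager in $\cals(\TT)$, which is precisely Theorem~\ref{t.generic_cpt_metriz_ab}. I do not expect a deep obstacle here, since all the analytic and category-theoretic work is already contained in the cited homeomorphisms and in Theorem~\ref{t.generic_ctbl_ab}; the main thing to get right is the bookkeeping of which isomorphism class corresponds to which under each transfer, in particular the direction of the dual and the fact that the generic compact group is $\what A=Z$ rather than $A$ itself.
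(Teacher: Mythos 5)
Your proposal is correct and takes essentially the same route as the paper's own proof: Corollary~\ref{c.main_general} plus Lemma~\ref{l.open_dense} to pass from $\cala$ through $\cala^*$ to $\cala'$, then the isomorphism-preserving homeomorphisms of Proposition~\ref{p.abelian_equiv} and Theorem~\ref{t.pontryagin_homeo}, and finally the identification $\what A=Z$ via Proposition~\ref{p.pontryagin}(3). The only difference is that you make explicit some bookkeeping the paper leaves implicit, such as the duality direction $\what K\cong A\iff K\cong\what A$.
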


\begin{proof}
It is easy to check that $\cala^*$ is $G_\delta$ in $\calm$. Thus, by Theorem~\ref{t.generic_ctbl_ab} and Corollary~\ref{c.main_general}, the isomorphism class of $A$ is comeager in $\cala^*$. Since $(\star)_{\cala'}$ clearly holds and $\cald\cap\cala'\subseteq\cala^*$, Lemma~\ref{l.open_dense} shows that $\cala^*$ is comeager in $\cala'$, hence the isomorphism class of $A$ is comeager in $\cala'$ as well. By Proposition~\ref{p.abelian_equiv}, it follows that the isomorphism class of $A$ is comeager in $\cala''$ as well. Now Theorem~\ref{t.pontryagin_homeo} shows that the isomorphism class of $\what A$ is comeager in $\cals(\TT)$. However, $\what A=Z$ by Proposition~\ref{p.pontryagin} (3).
\end{proof}

Recall from Subsection~\ref{ss.compact_metriz_ab} the space $\calc(\TT)$ of connected compact metrizable abelian groups. Let
$$\calt\calf=\{G\in\cala:\ G\text{ is torsion-free}\},\quad \calt\calf'=\{N\in\cala':\ F_\infty/N\text{ is torsion-free}\},\quad\text{and}$$
$$\calt\calf''=\{N\in\cala'':\ F/N\text{ is torsion-free}\}.$$
The following two theorems were proved separately in \cite{darji2023generic}. We will use Corollary~\ref{c.main_general} to show that Theorem~\ref{t.solenoid} follows directly from Theorem~\ref{t.torsion_free}.

\begin{theorem}\label{t.torsion_free}
\cite[Thm~4.3]{darji2023generic}
The isomorphism class of $(\rat,+)$ is comeager in $\calt\calf$.
\end{theorem}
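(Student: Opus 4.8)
The plan is to give a direct Baire category argument based on the classical characterization of $(\rat,+)$ as the unique countably infinite torsion-free divisible abelian group of rank $1$. Since torsion-freeness is $G_\delta$ in $\calg$ (Remark~\ref{r.G_delta_properties}) and $\cala$ is closed, the space $\calt\calf$ is $G_\delta$, hence Polish by Theorem~\ref{t.comp_metriz}. Moreover every $\wtilde G$ with $G\in\calt\calf$ is infinite (its underlying set is $\nat$) and torsion-free, hence of rank at least $1$. Therefore it suffices to show that the set of $G\in\calt\calf$ for which $\wtilde G$ is divisible and the set of $G\in\calt\calf$ for which $\wtilde G$ has rank at most $1$ are each comeager: on the intersection, $\wtilde G$ is a countably infinite torsion-free divisible abelian group of rank exactly $1$, so $\wtilde G\cong\rat$, and the isomorphism class of $\rat$ is comeager.

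Both properties are naturally presented as countable intersections of open sets. For divisibility, fix $a\in\nat$ and $n\ge 1$ and put $D_{a,n}=\bigcup_{b\in\nat}\{G\in\calt\calf:\ nb=a\}$; for rank, fix $a,b\in\nat$ and put $R_{a,b}=\bigcup_{(m,n)\neq(0,0)}\{G\in\calt\calf:\ ma+nb=e_G\}$. By Proposition~\ref{p.group_op_basis} each set in these unions is clopen, so $D_{a,n}$ and $R_{a,b}$ are open, and divisibility equals $\bigcap_{a,n}D_{a,n}$ while rank at most $1$ equals $\bigcap_{a,b}R_{a,b}$. Since a dense open set is comeager and a countable intersection of comeager sets is comeager, it remains only to prove that each $D_{a,n}$ and each $R_{a,b}$ is dense.

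Density will follow at once from a single realizability lemma: every nonempty basic clopen $\calu\subseteq\calt\calf$ contains some $G$ with $\wtilde G\cong\rat$. Indeed, in a copy of $\rat$ every element is divisible and every pair of elements is $\integer$-linearly dependent, so realizing $\calu$ inside $\rat$ (after enlarging its finite set of named elements to include $a,b$, respectively a fresh witness for $a/n$) produces a point of $\calu\cap R_{a,b}$, respectively $\calu\cap D_{a,n}$. To prove the lemma, note that $\calu$ is cut out by finitely many word-equations and word-inequations in finitely many named elements $c_1,\dots,c_r\in\nat$; in the abelian setting these are $\integer$-linear relations and non-relations. Choose any witness $G_0\in\calu$, let $g_i\in\wtilde{G_0}$ be the named elements, and pass to the $\rat$-vector space $W=\wtilde{G_0}\otimes_\integer\rat$, into which $\wtilde{G_0}$ embeds because it is torsion-free. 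A generic $\rat$-linear functional $\lambda$ on the finite-dimensional span of the $g_i\otimes 1$ separates the finitely many differences forced to be nonzero, so $c_i\mapsto\lambda(g_i\otimes 1)$ is an injection into $\rat$ satisfying all the equations (they are $\integer$-linear, hence preserved by $\lambda$) and all the inequations. Extending this finite injection to a bijection $\nat\to\rat$ and pulling back the addition of $\rat$ yields the required $G\in\calu$ with $\wtilde G\cong\rat$.

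The main obstacle is exactly this lemma, together with the bookkeeping that the pulled-back operation lands in the prescribed $\calu$. Conceptually the content is that a finite system of torsion-free-abelian constraints can never force two named elements to be independent --- rank-$1$ witnesses such as $\rat$ always suffice --- which is what makes rank at most $1$ (rather than some larger rank) the generic behavior; note that this is consistent with independence being preserved under extension, because the set of operations witnessing independence of a fixed pair is merely nowhere dense. Once the lemma is established, the openness of $D_{a,n}$ and $R_{a,b}$ is routine via Proposition~\ref{p.group_op_basis}, and assembling the countably many resulting dense open sets yields the comeager isomorphism class of $(\rat,+)$, as required by Proposition~\ref{p.comeager_dense_G_delta}.
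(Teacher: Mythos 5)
Your proposal is correct, but note that the paper contains no proof of this statement at all: Theorem~\ref{t.torsion_free} is imported verbatim from \cite[Thm~4.3]{darji2023generic} and serves only as the input that the transfer machinery (Corollary~\ref{c.main_general}, Lemma~\ref{l.open_dense}, Proposition~\ref{p.abelian_equiv}, and Pontryagin duality) converts into Theorem~\ref{t.solenoid}. So yours is a genuinely independent, self-contained argument, and it checks out. The decomposition into the divisibility sets $D_{a,n}$ and dependence sets $R_{a,b}$, combined with the realizability lemma that every nonempty basic clopen subset of $\calt\calf$ contains some $G$ with $\wtilde G\cong\rat$, works; in fact the lemma makes all the $D_{a,n}$ and $R_{a,b}$ dense simultaneously, since any such $G$ lies in every one of them, so your detour through ``enlarging the named elements'' by a fresh witness for $a/n$ is unnecessary. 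The delicate points are handled: the constraints of Proposition~\ref{p.group_op_basis} become $\integer$-linear relations and non-relations after abelianization, relations are preserved by the homomorphism $x\mapsto\lambda(x\otimes 1)$, and torsion-freeness of the witness $\wtilde{G_0}$ guarantees that the finitely many vectors that $\lambda$ must not annihilate (pairwise differences of named elements, needed for injectivity and hence for extending to a bijection $\nat\to\rat$, together with the inequation vectors) are nonzero in $\wtilde{G_0}\otimes_\integer\rat$; a functional avoiding finitely many hyperplanes exists because $\rat$ is infinite. Two cosmetic remarks: sets such as $\left\{G:\ ma+nb=e_G\right\}$ are clopen only after eliminating $e_G$ (e.g.\ rewriting the condition as $ma+nb+a=a$), exactly in the spirit of the paper's remark following the definition of $\calg$; and passing from rank at most $1$ to rank exactly $1$ on the comeager intersection uses that $\wtilde G$ is infinite, which you correctly note. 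As for what each route buys: your argument is elementary and entirely local to $\calg$, essentially reconstructing the kind of proof given in \cite{darji2023generic}, whereas the paper's point is the opposite direction of traffic --- given this theorem as a black box, Corollary~\ref{c.main_general} transports it to the marked-group space $\calt\calf'$ and then, via Propositions~\ref{p.abelian_equiv} and \ref{p.pontryagin} and Theorem~\ref{t.pontryagin_homeo}, to the statement about the universal solenoid in $\calc(\TT)$; your proof slots in cleanly as the missing base case of that pipeline.
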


\begin{theorem}\label{t.solenoid}
\cite[Thm~3.7]{darji2023generic}
The isomorphism class of the universal solenoid is comeager in $\calc(\TT)$.
\end{theorem}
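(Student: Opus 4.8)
The plan is to mirror the proof of Theorem~\ref{t.cpt_metriz_ab_transfer} exactly, using the chain of spaces $\calt\calf \to \calt\calf^* \to \calt\calf' \to \calt\calf'' \to \calc(\TT)$ and pushing Theorem~\ref{t.torsion_free} along it. First I would verify that $\calt\calf$ is a $G_\delta$ group property in $\calg$ (torsion-freeness is property (3) in the list, and $\calt\calf = \cala \cap \{G : G \text{ torsion-free}\}$ is a $G_\delta$ set since $\cala$ is closed) and that $\calt\calf^*$ is $G_\delta$ in $\calm$, which is covered by Remark~\ref{r.G_delta_properties}. With these hypotheses in hand, I would apply Corollary~\ref{c.main_general} with $\cals = \calt\calf$ to conclude that the isomorphism class of $(\rat,+)$ is comeager in $\calt\calf^*$, transferring Theorem~\ref{t.torsion_free} from $\calg$ to $\calm$.

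Next I would bridge from $\calt\calf^*$ up to $\calt\calf'$ exactly as in the abelian case. The condition $(\star)_{\calt\calf'}$ clearly holds: any countable torsion-free group embeds into an infinite torsion-free group (e.g. its product with $\integer$, or simply itself when already infinite), so Lemma~\ref{l.open_dense} applies and $\cald \cap \calt\calf'$ is comeager in $\calt\calf'$. Since $\cald \cap \calt\calf' \subseteq \calt\calf^* \subseteq \calt\calf'$ follows from the definitions, $\calt\calf^*$ is comeager in $\calt\calf'$, and therefore the isomorphism class of $(\rat,+)$ is comeager in $\calt\calf'$ as well. I would then invoke Proposition~\ref{p.abelian_equiv}, noting that its homeomorphism $\Psi$ restricts to a homeomorphism $\calt\calf'' \to \calt\calf'$ (it preserves the torsion-freeness of the quotient since $F/N \cong F_\infty/\psi^{-1}(N)$), to transport comeagerness down to $\calt\calf''$, so that the isomorphism class of $(\rat,+)$ is comeager in $\calt\calf''$.

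Finally I would cross over to the compact side via Pontryagin duality. Theorem~\ref{t.pontryagin_homeo} gives the homeomorphism $\ann : \cals(\TT) \to \cala''$ that preserves isomorphism and satisfies $F/\ann(K) \cong \what K$; I would restrict it to identify $\calc(\TT)$ with $\calt\calf''$. The key point is Proposition~\ref{p.pontryagin}~(5): an LCA group $G$ is discrete and torsion-free if and only if $\what G$ is compact and connected. Thus $K \in \calc(\TT)$ (compact connected) corresponds precisely to $\ann(K) \in \calt\calf''$ (the quotient $\what K \cong F/\ann(K)$ being torsion-free), so $\ann$ carries $\calc(\TT)$ homeomorphically onto $\calt\calf''$. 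Transporting comeagerness through this homeomorphism, the isomorphism class of $\widehat{(\rat,+)}$ is comeager in $\calc(\TT)$; and $\widehat{(\rat,+)}$ is the universal solenoid by Proposition~\ref{p.pontryagin}~(4), which completes the proof.

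The main obstacle I anticipate is not any single deep step but rather pinning down precisely that $\ann$ restricts to a homeomorphism between the connected subspace $\calc(\TT)$ and the torsion-free subspace $\calt\calf''$, i.e. verifying that the duality correspondence respects these two distinguished closed subspaces. This rests entirely on the clean equivalence in Proposition~\ref{p.pontryagin}~(5), so the argument should go through as smoothly as the abelian analogue; the remaining work is the routine bookkeeping of checking the $G_\delta$ hypotheses and the inclusions $\cald \cap \calt\calf' \subseteq \calt\calf^* \subseteq \calt\calf'$.
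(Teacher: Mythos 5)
Your proposal is correct and follows essentially the same route as the paper's own proof: transfer Theorem~\ref{t.torsion_free} to $\calt\calf^*$ via Corollary~\ref{c.main_general}, pass to $\calt\calf'$ using $(\star)_{\calt\calf'}$ and Lemma~\ref{l.open_dense}, move to $\calt\calf''$ via Proposition~\ref{p.abelian_equiv}, and cross to $\calc(\TT)$ via Theorem~\ref{t.pontryagin_homeo} together with Proposition~\ref{p.pontryagin}~(4) and (5). The only difference is that you spell out details the paper leaves as easy to check, in particular that $\ann$ restricts to a homeomorphism between $\calc(\TT)$ and $\calt\calf''$, which you justify correctly via Proposition~\ref{p.pontryagin}~(5).
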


\begin{theorem}
Theorem~\ref{t.solenoid} follows from Theorem~\ref{t.torsion_free}.
\end{theorem}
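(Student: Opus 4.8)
The plan is to mirror the proof of Theorem~\ref{t.cpt_metriz_ab_transfer} verbatim at the level of strategy, replacing "abelian" with "torsion-free abelian" throughout and ending with the \emph{connected} compact metrizable abelian groups rather than all of them. Concretely, I want to transport the comeager isomorphism class of $(\rat,+)$ from $\calt\calf$ to $\calc(\TT)$ along a chain of comeagerness-preserving maps: first from $\calt\calf$ to the marked-group side $\calt\calf'$ via Corollary~\ref{c.main_general} and Lemma~\ref{l.open_dense}, then to the free-abelian side $\calt\calf''$ via Proposition~\ref{p.abelian_equiv}, and finally to $\calc(\TT)$ via the restriction of the annihilator homeomorphism of Theorem~\ref{t.pontryagin_homeo}.

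For the first stage I would note that $\calt\calf^*$ is $G_\delta$ in $\calm$, which is supplied by Remark~\ref{r.G_delta_properties} applied to the torsion-free property (item (3)). Combining this with Theorem~\ref{t.torsion_free} and Corollary~\ref{c.main_general} shows that the isomorphism class of $(\rat,+)$ is comeager in $\calt\calf^*$. Since $(\star)_{\calt\calf'}$ clearly holds and $\cald\cap\calt\calf'\subseteq\calt\calf^*$, Lemma~\ref{l.open_dense} gives that $\calt\calf^*$ is comeager in $\calt\calf'$; hence the isomorphism class of $(\rat,+)$ is comeager in $\calt\calf'$. The homeomorphism $\Psi$ of Proposition~\ref{p.abelian_equiv} preserves isomorphism of quotients, so it restricts to a homeomorphism $\calt\calf''\to\calt\calf'$, and therefore the isomorphism class of $(\rat,+)$ is comeager in $\calt\calf''$ as well.

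The last stage is where the genuine work lies. I need to verify that the annihilator map $\ann:\cals(\TT)\to\cala''$ of Theorem~\ref{t.pontryagin_homeo} restricts to a homeomorphism $\calc(\TT)\to\calt\calf''$, i.e.\ that $K\in\cals(\TT)$ is connected exactly when $\ann(K)\in\calt\calf''$. For this, recall $F/\ann(K)\cong\what K$, which is a discrete group. Applying Proposition~\ref{p.pontryagin}~(5) to $G=\what K$, and using $\what{\what K}\cong K$, the group $\what K$ is (discrete and) torsion-free if and only if $K$ is (compact and) connected. Thus $K$ is connected iff $F/\ann(K)\cong\what K$ is torsion-free iff $\ann(K)\in\calt\calf''$, so $\ann$ indeed restricts to a homeomorphism $\calc(\TT)\to\calt\calf''$. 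I expect this matching of "connected" on the topological side with "torsion-free" on the algebraic side, through the double-dual identification, to be the main point requiring care.

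Granting the restricted homeomorphism, comeagerness is preserved, so the isomorphism class of $(\rat,+)$ in $\calt\calf''$ corresponds to a comeager isomorphism class in $\calc(\TT)$. An $N\in\calt\calf''$ with $F/N\cong(\rat,+)$ satisfies $\what{\ann^{-1}(N)}\cong F/N\cong(\rat,+)$, whence $\ann^{-1}(N)\cong\what{(\rat,+)}$; that is, the corresponding comeager class in $\calc(\TT)$ is the isomorphism class of $\what{(\rat,+)}$. By Proposition~\ref{p.pontryagin}~(4), $\what{(\rat,+)}$ is the universal solenoid, which yields exactly the statement of Theorem~\ref{t.solenoid} and completes the argument.
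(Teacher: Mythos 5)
Your proposal is correct and follows essentially the same route as the paper's own proof: transfer via Corollary~\ref{c.main_general} and Lemma~\ref{l.open_dense} to $\calt\calf'$, then Proposition~\ref{p.abelian_equiv} to $\calt\calf''$, then the annihilator homeomorphism together with Proposition~\ref{p.pontryagin}~(5) and~(4); your only additions are spelling out explicitly (and correctly) why $\ann$ restricts to a homeomorphism $\calc(\TT)\to\calt\calf''$, which the paper leaves implicit. One trivial slip: the relevant entry of Remark~\ref{r.G_delta_properties} is property~(2) (torsion-free abelian), not property~(3), though your reading also works since $\calt\calf$ is the intersection of the $G_\delta$ torsion-free class with the closed abelian class.
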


\begin{proof}
It is easy to check that $\calt\calf$ is $G_\delta$ in $\calg$ and $\calt\calf^*$ is $G_\delta$ in $\calm$. Thus, by Theorem~\ref{t.torsion_free} and Corollary~\ref{c.main_general}, the isomorphism class of $(\rat,+)$ is comeager in $\calt\calf^*$. Since $(\star)_{\calt\calf'}$ clearly holds and $\cald\cap\calt\calf'\subseteq\calt\calf^*$, Lemma~\ref{l.open_dense} shows that $\calt\calf^*$ is comeager in $\calt\calf'$, hence the isomorphism class of $(\rat,+)$ is comeager in $\calt\calf'$ as well. By Proposition~\ref{p.abelian_equiv}, the isomorphism class of $(\rat,+)$ is comeager in $\calt\calf''$ as well. Now Theorem~\ref{t.pontryagin_homeo} and Proposition~\ref{p.pontryagin} (5) show that the isomorphism class of the dual of $(\rat,+)$ is comeager in $\calc(\TT)$. This concludes the proof because the dual of $(\rat,+)$ is the universal solenoid by Proposition~\ref{p.pontryagin} (4). 
\end{proof}

\section*{Acknowledgment}

I would like to thank Slawomir Solecki for encouraging me to write up these results in a short paper, and Anush Tserunyan for drawing my attention to the paper \cite{CHEN_2025}. I am especially grateful to Ruiyuan Chen for a helpful and inspiring discussion and also for allowing me to include his arguments in the appendix.

\section*{Funding declaration}

The author was supported by the National Research, Development and Innovation Office -- NKFIH, grant no.~146922 and the Hungarian Academy of Sciences Momentum Grant no.~2022-58.

\printbibliography

\appendix

\section*{Appendix}

The arguments presented in this appendix are due to Ruiyuan Chen. Based partially on personal communication, the text itself was written by the author, who takes full responsibility for mistakes.

First, let us outline very briefly how Corollary~\ref{c.main_intro} can be derived from results in \cite{CHEN_2025}. Let $\cali$ denote the space of all isomorphisms between structures in $\calg$ and structures in $\calm$. By Examples~6.1 and 6.12 in \cite{CHEN_2025}, $\calg$ and $\calm$ can be viewed as étale bundles of structures with $\Sigma_1$ saturations. It follows easily (see \cite[Lemma~5.18]{CHEN_2025}) that the domain $\dom:\cali\to\calg, \varphi\mapsto \dom(\varphi)$ and codomain $\cod:\ \cali\to\calm, \varphi\mapsto \cod(\varphi)$ maps are open and continuous onto $\dom(\cali)$ and $\cod(\cali)$ respectively. Here $\dom(\cali)=\calg$, and $\cod(\cali)$ is a comeager subset of $\calm$. Using Proposition~\ref{p.open_cont_surj}, we conclude that generic properties can be transferred between $\calg$ and $\calm$.

Now we present a nice, more self-contained argument that provides an alternative proof for Corollary~\ref{c.main_intro}. Readers not familiar with étale structures may find this proof more accessible.

Let $\calx$ denote any of the spaces $\calg$ and $\calm$, and let $\calp\subseteq\calx$ be a group property.

\begin{theorem*}
The group property $\calp$ is generic in $\calx$ if and only if there is a $\Pi_2$ sentence $\Phi$ in the logic $L_{\omega_1,\omega}$ (see \cite[Sec~16.C]{kechris2012classical}) based on the language $L$ of groups such that the following hold.

(A) $\Phi\implies\calp$. More precisely, the set $\calp_\Phi\subseteq\calx$ defined by $\Phi$ is a subset of $\calp$.

(B) For every $\Sigma_1$ sentence $\Psi$ in $L_{\omega_1,\omega}$ that is satisfied by at least one countably infinite group, the conjunction $\Psi\land\Phi$ is also satisfied by at least one countably infinite group.
\end{theorem*} 

Note that Corollary~\ref{c.main_intro} follows immediately from this theorem.

\begin{proof}
First, we will prove the following.

\textbf{Claim~1.} A group property $\calp$ is generic in $\calx$ if and only if it contains a set $\calb$ that is \emph{isomorphism-invariant} and dense $G_\delta$ in $\calx$.

\textbf{Proof.} It suffices to prove the \emph{only if} part. Let $\calp\subseteq\calx$ be a comeager group property. (Thus $\calp$ is invariant by assumption.) Pick any dense $G_\delta$ set $\calb_0\subseteq\calx$ such that $\calb_0\subseteq\calp$.

\emph{Case~1.} $\calx=\calg$. Recall that there is a natural continuous action $\alpha:S_\infty\times\calg\to\calg$, namely, we define $\sigma G$ by pushing forward the group operation $G$ along the bijection $\sigma$. We claim that the so-called Vaught transform (see \cite[Def.~16.2]{kechris2012classical})
$$\calb={\calb_0}^*=\{G\in\calg:\ \{\sigma\in S_\infty:\ \sigma G\in \calb_0\}\text{ is comeager in }S_\infty\}$$
is a good witness. The facts that $\calb\subseteq\calp$ and $\calb$ is isomorphism-invariant are clear from the definition. It is $G_\delta$ since the Vaught transform of a $\Pi^0_\alpha$ set is $\Pi^0_\alpha$ by \cite[Ex~22.23]{kechris2012classical}. (Also, this is easy to prove for $G_\delta$ sets.) It remains to prove that $\calb$ is comeager in $\calg$. Let $\calc_0=\alpha^{-1}(\calb_0)\subseteq S_\infty\times\calg$.
%Write the set $\calc_0=\alpha^{-1}(\calb_0)\subseteq S_\infty\times\calg$, which is $G_\delta$ since $\alpha$ is continuous, as an intersection of open sets: $\calc_0=\bigcap_{n\in\nat}\calu_n$. Let $\{\calv_k:\ k\in\nat\}$ be any countable basis for $S_\infty$ consisting of nonempty sets. Now $G\in\calb$ if and only if each open set $(\calu_n)^G$ is dense in $S_\infty$, where $(\calu_n)^G$ is the horizontal section $\{\sigma\in S_\infty:\ (\sigma,G)\in\calu_n\}$. Thus
%$$G\in\calb\iff \forall n, k\in\nat\ ((\calu_n)^G\cap\calv_k\neq\emptyset)\iff G\in\bigcap_{n,k\in\nat}\proj_\calg(\calu_n\cap(\calv_k\times\calg)),$$
%where $\proj_\calg$ is the projection to $\calg$. Hence $\calb$ is indeed $G_\delta$.
Since the vertical section $(\calc_0)_\sigma=\sigma^{-1}(\calb_0)$ is comeager for every $\sigma\in S_\infty$, it follows by the Kuratowski--Ulam theorem that $\calb$ is also comeager.

\emph{Case~2.} $\calx=\calm$. This is similar to Case 1, but we need to use a groupoid action and an appropriate generalization of the Kuratowski--Ulam theorem. Here we only sketch the proof. The interested reader may consult the notes \cite{CHEN_NOTES_2025} to fill in the missing details and precise definitions.

The isomorphism groupoid $\cali$ of $\calm$ is the set
$$\left\{(M,N,f)\in\calm^2\times 2^{{F_\infty}^2}:
\begin{array}{l}
\forall u,u',v,v'\in F_\infty\ (uM=u'M\land vN=v'N\implies(f(u,v)=f(u',v')))\land\\
\forall u\in F_\infty\ \exists v,w\in F_\infty\ (f(u,v)=1\land f(w,u)=1)\land\\
\forall u,v,v'\in F_\infty\ (vN\neq v'N\implies (f(u,v)=0\lor f(u,v')=0))\land\\
\forall u,u', v\in F_\infty\ (uM\neq u'M\implies f(u,v)=0\lor f(u',v)=0)\land\\
\forall u,u',v,v',w\in F_\infty\ (f(u,v)=1\land f(u',v')=1\land f(uu',w)=1\implies\\
\implies wN=vv'N)
\end{array}\right\}$$
equipped with the subspace topology inherited from $\calm^2\times 2^{{F_\infty}^2}$. A triple $(M,N,f)$ encodes an isomorphism between $F_\infty/M$ and $F_\infty/N$ as a subset of $F_\infty\times F_\infty$ that is invariant under the product of the coset equivalence relations associated to $M$ and $N$. Note that $\cali$ is Polish. Let $\dom: \cali\to\calm, \dom(M,N,f)=M$ and $\cod:\cali\to\calm, \cod(M,N,f)=N$. It is clear from the definitions that the maps $\dom$ and $\cod$ are continuous and surjective. Let us prove that they are open. By symmetry, it suffices to check $\dom$. We need to introduce further notions.

Sets of form
$$\calw=\{(M,N,f)\in\cali:\ M\in\calu\land N\in\calv\land f(u_0,v_0)=1\land\ldots\land f(u_{m-1},v_{m-1})=1\}$$
with $\calu,\calv\subseteq\calm$ open, $l\in\nat$, and $u_0,v_0,\ldots,u_{m-1},v_{m-1}\in F_\infty$ constitute a basis for $\cali$.  We define the space of $m$-pointed marked groups as $\calm_m=(\calm\times {F_\infty}^m)/\sim$, where $(M,(u_0,\ldots,u_{m-1}))\sim (N,(v_0,\ldots,v_{m-1}))$ if and only if $M=N$ and $u_iM=v_iM$ for each $i<m$. Let $\varrho_m:\calm\times {F_\infty}^m\to\calm_m$ denote the quotient map. It is easy to check that sets of the form $\varrho_m(\calu\times\{(u_0,\ldots,u_{m-1})\})$ with $\calu\subseteq\calm$ open and $u_0,\ldots,u_{m-1}\in F_\infty$ constitute a basis for $\calm_m$. The isomorphism groupoid $\cali$ acts naturally on $\calm_m$: the element $(M,N,f)$ maps the class of $(M,(u_0,\ldots,u_{m-1}))$ to the class of $(N,(v_0,\ldots,v_{m-1}))$, where the $v_i$ are such that $f(u_i,v_i)=1$ for each $i<m$. Let $\pi:\calm_m\to\calm$ denote the ``projection''. That is, $\pi$ maps the class of $(M,(u_0,\ldots,u_{m-1}))$ to $M$. Note that this is an open map.
Now we can write the set $\dom(\calw)$ as
$$\pi(\varrho_m(\calu\times\{(u_0,\ldots,u_{l-1})\})\cap(\cali\cdot\varrho_m(\calv\times\{(v_0,\ldots,v_{l-1})\}))),$$
where $\cali\cdot\varrho_m(\calv\times\{(v_0,\ldots,v_{l-1})\})$ is the isomorphism saturation of $\varrho_m(\calv\times\{(v_0,\ldots,v_{l-1})\})$, which is open by \cite[Example~6.12]{CHEN_2025} (see also \cite[Lemma]{CHEN_NOTES_2025}). Since the map $\pi$ is open, we conclude that $\dom(\calw)$ is open.

Let $\calc_0=\cod^{-1}(\calb_0)$ and
$$\calb=\{M\in\calm:\ \calc_0\text{ is comeager in } \dom^{-1}(M)\}.$$
Then $\calb\subseteq\calp$ is clear from the definition, and it is easy to prove that $\calb$ is isomorphism-invariant. Let $\{\calv_k:\ k\in\nat\}$ be any countable basis for $\cali$. Similarly to Case 1, $\calc_0$ is $G_\delta$ in $\cali$, hence we can write it as an intersection of open sets: $\calc_0=\bigcap_{n\in\nat}\calu_n$. Now $M\in\calb$ if and only if each open set $\calu_n$ is dense in $\dom^{-1}(M)$. Thus
$$M\in\calb\iff \forall n, k\in\nat\ (M\in\dom(\calv_k)\implies M\in\dom(\calu_n\cap\calv_k)),$$
which shows that $\calb$ is $G_\delta$ since the map $\dom$ is open. To see that $\calb$ is comeager in $\calm$ first note that $\calc_0$ is comeager in $\cali$ by Proposition~\ref{p.open_cont_surj}. Then apply the following generalization of the Kuratowski--Ulam theorem to the map $\dom:\cali\to\calm$.

\begin{theorem*}\label{t.general_KU}
\cite[Thm A.1]{MELLERAY_TSANKOV_2013} Let $X$, $Y$ be Polish spaces and $f:X\to Y$ be a continuous, open map. Let $A$ be a Baire measurable subset of $X$. Then the following are equivalent:
\begin{enumerate}
    \item[(A)] $A$ is comeager in $X$;
    \item[(B)] for comeager many $y\in Y$ the set $A\cap f^{-1}(y)$ is comeager in $f^{-1}(y)$.
\end{enumerate}
\end{theorem*}
This concludes the proof of Claim~1. It remains to prove the following.

\textbf{Claim~2.} For a set $\cals\subseteq\calx$ the following are equivalent.

(1) It is an isomorphism-invariant dense $G_\delta$ subset of $\calx$.

(2) It is defined by a $\Pi_2$ sentence $\Phi$ in the logic $L_{\omega_1\omega}$ such that condition (B) holds.

\textbf{Proof.} In the case $\calx=\calg$, the fact that isomorphism-invariant $G_\delta$ sets are exactly the sets defined by $\Pi_2$ sentences in $L_{\omega_1\omega}$ is a special case of the level-by-level version \cite[Ex~22.24]{kechris2012classical} of the classical Lopez-Escobar theorem \cite[Thm~16.8]{kechris2012classical}, which is due to Vaught \cite{VAUGHT_74}. In the case $\calx=\calm$, the same fact is a special case of \cite[Thm~10.2]{CHEN_2025}, which is stated generally for étale structures. Readers not familiar with étale structures may consult the notes \cite{CHEN_NOTES_2025} about the Lopez-Escobar theorem for marked structures.

To verify $(1)\implies (2)$ fix any set $\cals\subseteq\calx$ that satisfies $(1)$. By the first paragraph, it is defined by a $\Pi_2$ sentence $\Phi$ in $L_{\omega_1\omega}$. Let $\Psi$ be a $\Sigma_1$ sentence in $\inflogic$ that is satisfied by a countably infinite group. Then $\Psi$ defines a nonempty open set $\calv$ in $\calx$. Since countably infinite groups form a comeager set in $\calx$, the intersection $\cals\cap\calv$ contains a countably infinite group, which is a model of $\Phi\land\Psi$.

For $(2)\implies (1)$, fix any set $\cals\subseteq\calx$ that satisfies (2), and any nonempty open set $\calu\subseteq\calx$. Since $\cals$ is isomorphism-invariant, we may assume that $\calu$ is also isomorphism-invariant. (Here we again use the crucial fact that the saturation of an open set is open.) Then there is a $\Sigma_1$ sentence $\Psi$ in $L_{\omega_1\omega}$ that defines $\calu$. Since countably infinite groups form a dense set in $\calx$, by (2), there is a countably infinite group that satisfies $\Phi\land\Psi$, witnessing that $\cals\cap\calu$ is nonempty.

The proof of the theorem is complete.
\end{proof}

\end{document}